\documentclass[11pt,reqno]{amsart}

\usepackage[T1]{fontenc}
\usepackage{lmodern}
\usepackage{microtype}
\usepackage{mathtools}
\usepackage{amssymb}
\usepackage{enumitem}
\usepackage{booktabs}
\usepackage{aliascnt}
\usepackage[margin=1.03in]{geometry}
\usepackage[dvipsnames]{xcolor}
\usepackage[colorlinks=true,
            linkcolor=MidnightBlue,
            citecolor=MidnightBlue,
            urlcolor=MidnightBlue,
            pdfauthor={Chenxiao Tian},
            pdftitle={Frankl's Union-Closed Sets Conjecture for Families of Height at Most Four and Structural Reductions at Height Five},
            pdfsubject={Union-closed families of bounded height},
            pdfkeywords={union-closed sets conjecture, Frankl's conjecture, bounded height, minimal counterexample, critical join-cover}]{hyperref}

\numberwithin{equation}{section}

\newtheorem{theorem}{Theorem}[section]

\newaliascnt{lemma}{theorem}
\newtheorem{lemma}[lemma]{Lemma}
\aliascntresetthe{lemma}

\newaliascnt{proposition}{theorem}
\newtheorem{proposition}[proposition]{Proposition}
\aliascntresetthe{proposition}

\newaliascnt{corollary}{theorem}
\newtheorem{corollary}[corollary]{Corollary}
\aliascntresetthe{corollary}

\newaliascnt{claim}{theorem}

\aliascntresetthe{claim}

\theoremstyle{definition}
\newaliascnt{definition}{theorem}
\newtheorem{definition}[definition]{Definition}
\aliascntresetthe{definition}

\newaliascnt{problem}{theorem}
\newtheorem{problem}[problem]{Problem}
\aliascntresetthe{problem}

\theoremstyle{remark}
\newaliascnt{remark}{theorem}
\newtheorem{remark}[remark]{Remark}
\aliascntresetthe{remark}

\usepackage[nameinlink,capitalise,noabbrev]{cleveref}

\crefname{equation}{equation}{equations}
\crefname{lemma}{lemma}{lemmas}
\crefname{proposition}{proposition}{propositions}
\crefname{theorem}{theorem}{theorems}
\crefname{corollary}{corollary}{corollaries}
\crefname{problem}{problem}{problems}
\crefname{remark}{remark}{remarks}

\newcommand{\F}{\mathcal{F}}
\newcommand{\A}{\mathcal{A}}
\newcommand{\G}{\mathcal{G}}
\newcommand{\Hh}{\mathcal{H}}
\newcommand{\W}{\mathcal{W}}
\newcommand{\J}{\mathcal{J}}
\newcommand{\1}{\mathbf{1}}
\newcommand{\set}[1]{\left\{#1\right\}}
\newcommand{\abs}[1]{\left|#1\right|}
\newcommand{\join}{\mathbin{\vee}}

\title[Height Four and Structural Reductions at Height Five]
{Frankl's Union-Closed Sets Conjecture:\\
Height Four and Structural Reductions at Height Five}

\author{Chenxiao Tian}
\address{Princeton University, Princeton, New Jersey 08544, USA}
\email{ct3471@alumni.princeton.edu}
\date{August 2026}

\subjclass[2020]{Primary 05D05; Secondary 06A07, 06A12}
\keywords{union-closed sets conjecture, Frankl's conjecture, bounded height, finite join-semilattice, minimal counterexample, critical pair, critical join-cover, coordinate projection}

\begin{document}

\begin{abstract}
Let $h(\F)$ denote the largest number of members in a strict inclusion chain of a finite union-closed family $\F$.  We work in the empty-set-free normalization, in which Frankl's conjecture asks for an element contained in strictly more than half of the members.  We prove the conjecture for $h(\F)\le4$.  The proof is governed by an equality case: a smallest counterexample has $2t$ members, at least three coordinates of frequency $t$, and $t\ge2n-1$, where $n=\lvert\bigcup\F\rvert$; a critical pair then forces its double-avoidance fiber to be the unique extremal height-two union-closed family, while a third critical coordinate yields the contradictory identity $t=2n-2$.

For empty-set-free height five we do not claim a complete proof.  Instead, we derive a structural theorem for a smallest counterexample.  Every critical coordinate $x$ has the coatom $U\setminus\{x\}$ in the family.  Every critical pair satisfies a full-top/large-fiber dichotomy, and every non-full pair gives an exact three-trace normal form.  Projection along a critical coordinate produces a matching-defect witness inequality.  We also introduce the critical join-cover number, the least number of join-irreducible members whose union contains all critical coordinates, and prove that it is either three or four: the a priori possible five-cover branch is excluded by a Boolean-fiber decomposition and a convexity argument.  If a minimal four-cover does not cover the universe, exactly one noncritical coordinate lies outside it and the family has a stable two-fiber normal form.  The remaining height-five branches and the transfer inequalities needed to eliminate them are stated explicitly.
\end{abstract}

\maketitle

\section{Introduction}

All families in this paper are finite and nonempty, and all member sets are finite.  A family \(\A\) is \emph{union-closed} if
\[
A,B\in\A \quad\Longrightarrow\quad A\cup B\in\A.
\]
Frankl's union-closed sets conjecture asserts that every nontrivial finite union-closed family contains an element belonging to at least half of its members.  We refer to the survey of Bruhn and Schaudt~\cite{BruhnSchaudt} for background and to Poonen~\cite{Poonen} for a foundational structural treatment.

The order-theoretic parameter considered here is the height
\[
H(\A)=\max\set{s:\ A_1\subsetneq A_2\subsetneq\cdots\subsetneq A_s,
\ A_i\in\A}.
\]
The height viewpoint was introduced in~\cite{TianHeight}; a general chain-condition proof for families of height at most three was subsequently obtained by Colbert~\cite{Colbert}.  Bouchard's averaging result treats separating height-four families under the additional hypothesis that a smallest irredundant cover of the small-set subfamily has at most two members~\cite{Bouchard}.  Here we prove the height-four statement without that auxiliary hypothesis and then identify a concrete residual obstruction at height five.

\subsection*{The convention and the parity issue}
Throughout the main argument, \(\F\) is a nonempty finite union-closed family of distinct nonempty sets, so \(\varnothing\notin\F\).  We seek an element occurring in \emph{strictly more} than \(\abs{\F}/2\) members.  This is equivalent to the standard formulation applied to
\[
\F^+=\F\cup\set{\varnothing},
\]
because frequencies do not change while the number of members increases by one.  The height changes as well: \(H(\F^+)=H(\F)+1\).

The distinction between the two conventions is not cosmetic.  If \(\abs{\F}=2t\) and two elements \(x,y\) each occur exactly \(t\) times, then the four-cell count satisfies
\[
N_{00}=N_{11}.
\]
After adjoining \(\varnothing\), the empty set enters the \(00\)-cell and one instead has
\[
N^{+}_{00}=N_{11}+1.
\]
Mixing these identities produces a spurious one-unit contradiction.  All counting below is carried out in the empty-set-free even family \(\F\).

Our first main theorem is the following.

\begin{theorem}[Height four]\label{thm:main-height4}
Let \(\F\) be a nonempty finite union-closed family of nonempty sets.  If \(H(\F)\le 4\), then there is an element \(x\in\bigcup\F\) such that
\[
\abs{\F_x}>\frac{\abs{\F}}2,
\qquad
\F_x:=\set{A\in\F:x\in A}.
\]
\end{theorem}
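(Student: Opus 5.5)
We argue by a minimal counterexample, following the outline in the abstract. Let \(\F\) be a counterexample to the theorem with \(H(\F)\le4\) and \(\abs{\F}\) minimal, and put \(U=\bigcup\F\in\F\) and \(n=\abs{U}\). The first step is to show that \(\F\) is \emph{separating}: if two coordinates appeared in exactly the same members, we could merge them without changing any frequency or the height. We may also discard the small cases \(H\le3\) at once. Next we sum all frequencies:
\[
\sum_{x\in U}\abs{\F_x}=\sum_{A\in\F}\abs{A}.
\]
Comparing this with \(\abs{\F_x}\le\abs{\F}/2\) for every \(x\), and using a lower bound on \(\sum\abs{A}\) coming from the fact that the height is at most four, we aim for the equality structure. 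That structure should be \(\abs{\F}=2t\), with a set \(C\) of \emph{critical} coordinates of frequency exactly \(t\), and \(\abs{C}\ge3\).

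To get \(\abs{C}\ge3\), we remove a suitable member and use minimality. The natural choices are a join-irreducible member, or the coatoms \(U\setminus\{x\}\). A member whose deletion keeps \(\F\) union-closed and does not raise any frequency above half forces tightness at those coordinates. The inequality \(t\ge 2n-1\) should come from a Reimer-type or averaging bound adapted to height four. Since chains have length at most four, every member lies at one of at most four ranks. Counting pairs \((A,x)\) with \(x\notin A\) then gives \(\sum_{A}(n-\abs{A})\ge n\cdot t\). The members below the top have size at most \(n-1\), and the low ranks are forced to be small, so we expect to obtain \(2t\ge 4n-2\).

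Take two critical coordinates \(x,y\). Writing \(N_{ab}\) for the number of members whose membership pattern at \((x,y)\) is \((a,b)\), we have \(N_{00}=N_{11}\). The double-avoidance fiber \(\F_{\bar x\bar y}=\{A\in\F: x,y\notin A\}\) is union-closed. Because every member of \(\F_{\bar x\bar y}\) lies strictly below members containing \(x\) or \(y\), and in particular below \(U\), this fiber has height at most two. Using \(N_{00}=N_{11}\) together with the bound on \(N_{11}\) coming from \(t\ge 2n-1\), the fiber must attain the maximum size of a height-two union-closed family on \(n-2\) points. Height two means the members are atoms together with their unions, with no strict chains of length three, and the extremal such family should be unique (essentially all singletons plus the full set, or a comparable rigid configuration). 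Knowing this fiber exactly pins down the coordinate counts. Finally, we apply the same analysis to a third critical coordinate \(z\), through the pairs \((x,z)\) and \((y,z)\). Since the fibers are rigid, we can count \(\abs{\F_z}\) directly, and this should give \(t=2n-2\), contradicting \(t\ge2n-1\).

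The main obstacle is the extremal step: proving the sharp upper bound for height-two union-closed, empty-set-free families together with uniqueness of the extremizer, and checking that equality is genuinely forced by \(N_{00}=N_{11}\) and \(t\ge2n-1\). I would first classify height-two families as sets of pairwise incomparable generators whose nontrivial unions all lie in a single top layer. I would then bound the count by \(n-2\) generators plus the unions they generate, and prove uniqueness by induction on the number of generators.
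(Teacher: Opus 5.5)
Your outline has the paper's skeleton: a minimal counterexample with $\abs{\F}=2t$, at least three critical coordinates, $t\ge2n-1$, $N_{00}=N_{11}$, a height-two double-avoidance fiber forced to be extremal, and a third critical coordinate forcing $t=2n-2$. But the steps that carry the argument are either missing or would fail as you propose them.

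The most serious gap is $t\ge2n-1$, on which the final contradiction rests. No averaging over pairs $(A,x)$ can give it. The identity $\sum_A(n-\abs{A})=\sum_x(2t-f(x))\ge nt$ only says members are small on average. A Reimer-type bound applied to $\F\cup\set{\varnothing}$, together with $f\le t$, gives $\frac{2t+1}{2}\log_2(2t+1)\le nt$, i.e.\ $2t+1<2^n$. That is an \emph{upper} bound on $t$.

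The lower bound is really a minimality statement. Fix a critical coordinate $u$ and let $C_a$ be the largest member avoiding $a$.
\begin{enumerate}
\item The $t$ members avoiding $u$ form a smaller union-closed family of height at most four. By minimality, some element lies in at least $\lceil (t+1)/2\rceil$ of them.
\item Order coordinates by frequency with $u=u_n$. The $n-1$ distinct sets $U,C_{u_1},\dots,C_{u_{n-1}}$ all contain $u$. Hu's staircase (a domination-chain argument) shows that some maximum-frequency element of step 1 lies in all $n-1$ of them.
\item These two subfamilies are disjoint, so $\lceil (t+1)/2\rceil+n-1\le t$.
\end{enumerate}
Likewise, $\abs{\F}=2t$ and $\abs{K}\ge3$ do not come from a frequency sum. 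They come from concrete deletions: first an inclusion-minimal member (which also disposes of the odd case), then inclusion-minimal members containing each newly found critical element. Each deletion preserves union-closure and produces a new coordinate of frequency exactly $t$.

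Three further points need correcting.

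\emph{The bound $N_{11}\ge n-1$.} It does not follow from $t\ge2n-1$, since $N_{10}+N_{11}=t$ leaves $N_{11}$ unconstrained. It comes from the $n-1$ distinct members $U$ and $C_a$ for $a\notin\set{x,y}$. These contain $x$ and $y$: if $x\notin C_a$, then $\F_x\subseteq\F_a$, and $f(a)\le t=f(x)$ forces equality, contradicting separation. The same $C_x$ gives the fiber's height bound, via $A\subsetneq C_x\subsetneq U$ for every $A$ avoiding $x,y$.

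\emph{The height-two extremizer.} It is not ``all singletons plus the full set'', which is not union-closed once $\abs{T}\ge3$. In a height-two union-closed family with top $T$, distinct proper members are incomparable and have union $T$. So their complements in $T$ are pairwise disjoint and nonempty. This gives $\abs{\G}\le\abs{T}+1$, with equality only for $\set{T}\cup\set{T\setminus\set{a}:a\in T}$. No induction on generators is needed.

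\emph{The final count.} It needs the correct extremizer and more than the three pair fibers. The fibers give $M_{000}=1$ and $M_{100}=M_{010}=M_{001}=n-2$. Combined with $N_{11}=n-1$ for each pair, they still leave $M_{111}$ undetermined. You must also use that the $xy$-cell is exactly $\set{U}\cup\set{C_a:a\notin\set{x,y}}$, of which only $C_z$ avoids $z$. This gives $M_{111}=n-2$ and hence $f(x)=2n-2$.
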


\begin{corollary}[Height five with a bottom element]\label{cor:height5-bottom}
Let \(\A\) be a finite nontrivial union-closed family.  If \(\varnothing\in\A\) and \(H(\A)\le5\), then some element belongs to at least half of the members of \(\A\).
\end{corollary}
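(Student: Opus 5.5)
The corollary follows from \cref{thm:main-height4} by removing the bottom element, using the translation between the two conventions described above. Let \(\A\) be finite, nontrivial and union-closed, with \(\varnothing\in\A\) and \(H(\A)\le5\).

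Put \(\F=\A\setminus\set{\varnothing}\). The first step is to check that \cref{thm:main-height4} applies to \(\F\):
\begin{itemize}[label=\(\circ\)]
\item \(\F\) is union-closed, since a union of two nonempty sets is nonempty and the union of two members of \(\F\) lies in \(\A\).
\item \(\F\) is nonempty, by nontriviality of \(\A\), which means \(\A\ne\set{\varnothing}\).
\item Every member of \(\F\) is nonempty.
\item \(H(\F)\le4\). Any strict chain \(A_1\subsetneq\cdots\subsetneq A_s\) in \(\F\) extends to the strict chain \(\varnothing\subsetneq A_1\subsetneq\cdots\subsetneq A_s\) in \(\A\), because \(A_1\ne\varnothing\). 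Hence \(s+1\le H(\A)\le5\), so \(s\le4\). Equivalently, \(H(\A)=H(\F^+)=H(\F)+1\), as noted in the discussion of conventions.
\end{itemize}

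Applying \cref{thm:main-height4} gives an element \(x\in\bigcup\F\) with \(\abs{\F_x}>\abs{\F}/2\). Now translate back to \(\A\). Since \(x\notin\varnothing\), we have \(\A_x=\F_x\) and \(\abs{\A}=\abs{\F}+1\).

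Frequencies are integers, so \(\abs{\F_x}>\abs{\F}/2\) gives \(2\abs{\F_x}\ge\abs{\F}+1\). Therefore
\[
\abs{\A_x}=\abs{\F_x}\ge\frac{\abs{\F}+1}{2}=\frac{\abs{\A}}{2},
\]
so \(x\) belongs to at least half of the members of \(\A\).

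There is no real obstacle here; all the difficulty lies in \cref{thm:main-height4}. The only delicate point is the integrality step: the strict inequality for \(\F\) turns into the non-strict bound \(\abs{\A}/2\) only after rounding up. This is the parity bookkeeping flagged in the introduction, and it is why the theorem is stated with strict inequality in the empty-set-free normalization.
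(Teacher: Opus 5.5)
Your proposal is correct and follows the paper's proof exactly: you delete \(\varnothing\), get \(H(\F)\le H(\A)-1\le4\) by extending chains downward, apply \cref{thm:main-height4}, and use integrality to turn \(\abs{\F_x}>\abs{\F}/2\) into \(\abs{\A_x}\ge\abs{\A}/2\). Your explicit checks that \(\F\) is nonempty and union-closed fill in routine details that the paper leaves implicit.
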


\begin{proof}
Set \(\F=\A\setminus\set{\varnothing}\).  Every chain in \(\F\) can be extended downward by \(\varnothing\), so \(H(\F)=H(\A)-1\le4\).  By \cref{thm:main-height4}, some element occurs in more than \(\abs{\F}/2\) members of \(\F\).  Its frequency is unchanged in \(\A\), and integrality gives frequency at least \(\abs{\A}/2\).
\end{proof}

At empty-set-free height five we prove a package of necessary conditions for a smallest counterexample.  The most useful pieces are the critical-coatom theorem, a dichotomy for every pair of critical elements, a trace-layer normal form for the nonfull case, and an exact projection formula.  The next theorem records the logical output in one place; all notation is defined in the corresponding sections.

\begin{theorem}[Structure of a smallest height-five counterexample]\label{thm:height5-summary}
Assume that an empty-set-free counterexample with height at most five exists, and choose one with the minimum number of members and then the minimum ground-set size.  Write \(U=\bigcup\F\), \(n=\abs{U}\), and \(\abs{\F}=2t\).  Then:
\begin{enumerate}[label=\textup{(\roman*)},leftmargin=2.5em]
\item at least three coordinates have frequency \(t\), every coordinate has frequency at most \(t\), and \(t\ge2n-1\);
\item for every critical coordinate \(x\), the largest member avoiding \(x\) is the coatom \(U\setminus\set{x}\);
\item for every distinct critical pair \(x,y\), either the union of all members avoiding both is \(U\setminus\set{x,y}\), or the double-avoidance fiber has at least \(\lceil(t+1)/2\rceil\ge n\) members;
\item in the second alternative, the family has the three-trace normal form of \cref{thm:trace-normal}, together with the exact layer counts of \cref{prop:layer-count};
\item for every critical coordinate \(p\), the deletion projection gives the exact witness inequality \cref{eq:projection-witness}, and its matching fiber has size at least three;
\item the critical join-cover number defined in \cref{def:critical-cover} belongs to \(\{3,4\}\);
\item if a minimal four-member critical join-cover has union \(V\ne U\), then \(U\setminus V\) is a single noncritical coordinate and \(\F\) has the stable two-fiber normal form of \cref{thm:four-cover-normal}.
\end{enumerate}
\end{theorem}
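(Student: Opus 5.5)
The plan is to treat \cref{thm:height5-summary} as a summary theorem and prove it item by item. Each item rests on the minimality of \(\F\), first in \(\abs{\F}\) and then in \(n\). The standing tools are the four-cell count for a pair of critical coordinates and the observation that every coordinate \(x\) has a largest member avoiding it, namely \(M_x=\bigcup\{A\in\F:x\notin A\}\). This set lies in \(\F\) by union-closure as long as some member avoids \(x\).

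\textbf{Item (i).} The equality-case analysis used for \cref{thm:main-height4} should transfer directly.
\begin{itemize}
\item Fix a maximal member. Delete it, or pass to a suitable subfamily that is again union-closed and has height at most five. Minimality then forces \(\abs{\F}\) to be even, say \(\abs{\F}=2t\), and forces every frequency to be at most \(t\).
\item If fewer than three coordinates had frequency \(t\), remove members containing the critical coordinates so as to lower the frequencies without destroying union-closure. This would give a smaller counterexample.
\item The bound \(t\ge2n-1\) comes from the usual chain count. Along a maximal chain of length at most five, each coordinate is added at some step. Summing frequencies gives \(\sum_x\abs{\F_x}\le nt\). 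Compare this with the lower bound on \(\sum_{A\in\F}\abs{A}\) forced by the fact that the union-closed family generated by the chain of coatoms has \(n\) distinct sets.
\end{itemize}

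\textbf{Item (ii).} Let \(x\) be critical, so \(\abs{\F_x}=t\). Then the fiber \(\F\setminus\F_x\) also has \(t\) members, and it is union-closed with top \(M_x\). Suppose \(M_x\ne U\setminus\{x\}\), and pick \(y\in U\setminus(M_x\cup\{x\})\). Then every member containing \(y\) contains \(x\), so \(\abs{\F_y}\le\abs{\F_x}\). Using criticality of a third coordinate, the map \(A\mapsto A\cup\{y\}\) should yield either a merging of \(x,y\) into one coordinate, which lowers \(n\), or a frequency exceeding \(t\). Either outcome contradicts minimality.

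\textbf{Items (iii) and (iv).} Let \(x,y\) be a critical pair. Then \(N_{00}=N_{11}\). The \(11\)-cell injects into the \(00\)-cell via the maps \(A\mapsto A\setminus\{x,y\}\) composed with unions with \(M_{xy}\). If \(M_{xy}\ne U\setminus\{x,y\}\), the height drops inside the double-avoidance fiber. The fiber then has height at most three, and Colbert's height-three theorem~\cite{Colbert}, applied there, gives a frequency bound. That bound together with \(N_{00}=N_{11}\) forces \(N_{00}\ge\lceil(t+1)/2\rceil\), and \(\lceil(t+1)/2\rceil\ge n\) follows from \(t\ge2n-1\). The three traces are the \(10\), \(01\) and \(11\) layers over this fiber, and the layer counts are obtained by solving the linear system given by the cell equalities.

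\textbf{Items (v)--(vii).} For (v), project along \(p\) via \(A\mapsto A\setminus\{p\}\). This is union-preserving. Count collisions: these are pairs \(A,A\cup\{p\}\) both in \(\F\), and they form the matching fiber. Minimality applied to the image gives the witness inequality, and the fiber has size at least three because three critical coordinates survive.

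For (vi), the lower bound of three holds because two join-irreducibles covering all critical coordinates contradict (iii). The upper bound of five follows from height five. The case of exactly five is the \textbf{main obstacle}. There I would decompose along the Boolean cube generated by the five irreducibles and show that the fiber sizes form a convex sequence. That convexity would then contradict the cell equalities, giving \(4\).

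For (vii), a coordinate outside \(V\) that is critical contradicts minimality of the cover. Two such coordinates could be merged, contradicting the minimality of \(n\). With a single outside coordinate, the family splits into two fibers according to whether it is present, which is the stable normal form of \cref{thm:four-cover-normal}.
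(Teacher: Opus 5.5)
Your proposal has genuine gaps. The central one is that your arguments for (ii), (iii), (iv) and (vii) never use height five through a forbidden six-member chain, which is how each of these steps is actually proved.

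In (ii), the inclusion $\F_y\subseteq\F_x$ for $y\notin C_x\cup\set{x}$ is correct but not contradictory, since a critical coordinate may a priori dominate a noncritical one. Also, $A\mapsto A\cup\set{y}$ does not map $\F$ into $\F$. One must work in $\G_{xy}$ for a second critical $y$:
its chains extend by $C_x\subsetneq U$, so $H(\G_{xy})\le3$. If $H(\G_{xy})\le2$, the extremal bound $\abs{\G_{xy}}\le\abs{T_{xy}}+1$ and $\abs{\G_{xy}}=N_{11}\ge n-1$ force $T_{xy}=U\setminus\set{x,y}\subseteq C_x$. Otherwise a chain $A\subsetneq B\subsetneq T_{xy}$ rules out $C_x\cup X\subsetneq U$ for every $X\ni x$, which makes any $q\notin C_x\cup\set{x}$ a twin of $x$.

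In (iii), Colbert's theorem gives a frequent element inside $\G_{xy}$ but no lower bound on $\abs{\G_{xy}}$. Moreover, nonfullness forces the fiber to have height exactly three; it is not a drop. The missing step is this: if $T_{xy}\cup X\subsetneq C_y=U\setminus\set{y}$ for an $x$-only member $X$, then $A\subsetneq B\subsetneq T_{xy}\subsetneq T_{xy}\cup X\subsetneq C_y\subsetneq U$ is a six-chain. Hence, with the symmetric statement, any $q\notin T_{xy}\cup\set{x,y}$ lies in all $2(t-d_{xy})$ one-sided members and in $U$, so $t\ge2(t-d_{xy})+1$.

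In (iv) you have misidentified the normal form. The traces are taken on $P=U\setminus T_{xy}$ and are $\varnothing$, $P\setminus\set{p}$ or $P$, not the $10/01/11$ cells. Proving this needs $C_p=U\setminus\set{p}$ for every $p\in P$, noncritical ones included, and then the chain $A\subsetneq B\subsetneq T\subsetneq T\cup F\subsetneq C_p\subsetneq U$. The layer counts come from $f(p)=2t-g-h_p$.

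Likewise in (vii), two outside coordinates are twins only because the chain $B_1\subsetneq B_1\cup B_2\subsetneq B_1\cup B_2\cup B_3\subsetneq V\subsetneq A\cup V\subsetneq U$ forces every member not inside $V$ to contain all of $U\setminus V$.

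The remaining items also fail as written. In (i), deleting a maximal member breaks union-closure in general, because the unique maximal member $U$ is usually a union of others; delete an inclusion-minimal member instead. More seriously, your averaging cannot reach $t\ge2n-1$. The forced members $U$ and $C_a$ contribute at most $n^2$ to $\sum_A\abs{A}$ and the other members at least one each. Comparing with $\sum_x f(x)\le nt$ therefore yields at best about $t\ge n+2$. The missing idea is to apply minimality to the $t$-member avoidance family $\F\setminus\F_u$ of a critical $u$. With $\varnothing$ adjoined, it has an element of frequency at least $\lceil(t+1)/2\rceil$. Hu's staircase then passes to a maximum-frequency element that also lies in exactly $n-1$ of the $n$ members $U,C_{u_1},\dots,C_{u_{n-1}}$, all of which contain $u$.

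In (v), you must check that $\pi_p(\F)$ is admissible: $\set{p}\notin\F$, and height does not increase, by lifting chains to partial unions. You must also check it is strictly smaller, which holds because $U\setminus\set{p}\in\J_p$ by (ii). Further, $c_p\ge3$ holds not because ``three critical coordinates survive.'' It holds because that same lower endpoint gives $j_p(z)\ge1$ for all $z\ne p$, so $c_p\le2$ would make $c_p-2j_p(z)\le0<2\delta_z+1$.

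In (vi), $\rho_c\ge3$ is not a consequence of (iii). Deleting one or two covering join-irreducibles leaves a union-closed family with $2t-1$ or $2t-2$ members and all frequencies at most $t-1$, which contradicts minimality.

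Finally, ``convexity of fiber sizes versus the cell equalities'' gives no mechanism for the five-cover case. The actual argument needs all of the following:
\begin{itemize}
\item the saturation $A\subseteq S_{I(A)}$ with $H(\F_I)\le\abs{I}$;
\item the identity $S_{[5]\setminus\set{i}}=U\setminus\set{x_i}$, which forces every nonprivate coordinate into at least two basis members;
\item the regularity $\sum_{I\ni i}q_I=t-16$ of the residual members;
\item rigidity of the height-two two-support fibers, giving support exactly two, $R\ge7$ and $t-16\ge4R+4$;
\item a Cauchy--Schwarz bound on the ten two-support multiplicities that exceeds the nonprivate incidence capacity $R(t-24)-\binom{R}{2}$.
\end{itemize}
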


\begin{remark}[Status of height five]\label{rem:height5-status}
\Cref{thm:height5-summary} is a conditional structure theorem: it describes every smallest counterexample if one exists, but it does not settle the unrestricted empty-set-free height-five case.  The remaining branches and two explicit transfer targets are recorded in \cref{sec:remaining}; neither target is assumed in any proof.
\end{remark}

The minimal-counterexample arguments descend from Norton and Sarvate~\cite{NortonSarvate}, Roberts and Simpson~\cite{RobertsSimpson}, Lo Faro~\cite{LoFaro}, and Hu~\cite{Hu}.  We include the required height-preserving adaptations in full.

\section{Minimal counterexamples under a height bound}

Let
\[
U=\bigcup\F,
\qquad n=\abs{U},
\qquad
f(x)=\abs{\F_x}\quad(x\in U).
\]
A family is \emph{separating} if \(\F_x\ne\F_y\) for all distinct \(x,y\in U\).

Fix an integer \(h\ge 1\), and suppose that a counterexample with \(H(\F)\le h\) exists.  Choose one with \(\abs{\F}\) minimum and, subject to that, with \(\abs{U}\) minimum.  We call such a family an \emph{\(h\)-minimal counterexample}.

\begin{lemma}[Separating reduction]\label{lem:separating}
Every \(h\)-minimal counterexample is separating.
\end{lemma}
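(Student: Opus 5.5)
The plan is to argue by contradiction: if two distinct coordinates are not separated, we merge them and obtain a counterexample on a smaller ground set, contradicting the second minimality condition. Suppose \(x\ne y\) in \(U\) satisfy \(\F_x=\F_y\), so every member of \(\F\) contains both \(x\) and \(y\) or neither. Define the deletion map \(\phi(A)=A\setminus\set{y}\) and the image family \(\F'=\set{\phi(A):A\in\F}\). We will check four things in turn: \(\phi\) is injective on \(\F\), \(\F'\) is union-closed and empty-set-free, \(H(\F')\le h\), and every element of \(\bigcup\F'\) has the same frequency in \(\F'\) as in \(\F\).

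\textbf{Injectivity.} Suppose \(\phi(A)=\phi(B)\). Then \(A\) and \(B\) agree off \(y\), and in particular they agree at \(x\). Since membership of \(y\) equals membership of \(x\) for every member of \(\F\), the sets \(A\) and \(B\) also agree at \(y\), so \(A=B\). Hence \(\abs{\F'}=\abs{\F}\).

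\textbf{Union-closure and nonemptiness.} Union-closure follows from \(\phi(A)\cup\phi(B)=\phi(A\cup B)\). For nonemptiness, if \(\phi(A)=\varnothing\), then \(A\subseteq\set{y}\). But \(y\in A\) forces \(x\in A\), so \(A=\varnothing\), which is excluded.

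\textbf{Height.} Because \(\phi\) is injective and monotone, a strict chain \(\phi(A_1)\subsetneq\cdots\subsetneq\phi(A_s)\) in \(\F'\) pulls back to distinct sets \(A_i\). These sets are nested: \(A_i\subseteq A_j\) holds off \(y\), and at \(y\) it follows from the containment at \(x\). So the pulled-back sets form a strict chain of the same length, and \(H(\F')\le H(\F)\le h\).

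\textbf{Frequencies.} Every \(z\in U\setminus\set{y}\) keeps its frequency exactly, and \(\bigcup\F'=U\setminus\set{y}\). Since no element of \(U\) has frequency exceeding \(\abs{\F}/2\), the same holds in \(\F'\). Thus \(\F'\) is a counterexample with \(\abs{\F'}=\abs{\F}\) and a strictly smaller ground set, contradicting \(h\)-minimality.

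The main obstacle is small: it is the two places where the separating hypothesis is used, namely the injectivity and the height comparison. Both reduce to the observation that membership of \(y\) is determined by membership of \(x\), so nothing is lost by deleting \(y\). The other point needing care is that \(\varnothing\) does not appear in \(\F'\), which is what keeps us within the empty-set-free convention.
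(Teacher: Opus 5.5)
Your proof is correct and follows the same approach as the paper: delete the duplicate coordinate \(y\), observe that the family keeps its size, union-closure, height bound and all surviving frequencies, and contradict minimality of the ground set. You verify in detail the injectivity, height and nonemptiness steps that the paper states briefly; the explicit check that \(\varnothing\) does not appear in \(\F'\) is a worthwhile addition.
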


\begin{proof}
If distinct coordinates \(x,y\) have the same occurrence vector, delete \(y\) from every member of \(\F\).  Because \(x\) and \(y\) always occur together, distinct members remain distinct after the deletion.  The resulting family is union-closed, has the same number of members, has height no larger than \(H(\F)\), and remains a counterexample because the frequency of every surviving coordinate is unchanged and the frequency of \(x\) equals that of the deleted coordinate.  Its ground set is smaller, contradicting the choice of \(\F\).
\end{proof}

\subsection{Parity and critical elements}

\begin{lemma}[Minimal parity and three critical coordinates]\label{lem:critical-three}
Let \(\F\) be an \(h\)-minimal counterexample.  Then
\[
\abs{\F}=2t
\]
for some positive integer \(t\), every element has frequency at most \(t\), and the critical set
\[
K:=\set{x\in U:f(x)=t}
\]
has at least three elements.
\end{lemma}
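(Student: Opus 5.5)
The plan is to use the classical minimal-counterexample deletion argument: remove the largest member \(U\), or a suitable maximal member, and compare frequencies. The family \(\F'=\F\setminus\set{U}\) is still union-closed whenever \(U\) is not the union of two other members. More robustly, we can use the standard facts that removing the top \(U\) from \(\F\) preserves union-closedness, since no union of two members other than \(U\) can equal \(U\) unless it already lies in \(\F'\). Removal also preserves nonempty members and does not increase the height. Note that \(U\in\F\) by union-closedness. If \(\abs{\F}\ge2\), then \(\F'=\F\setminus\set{U}\) is nonempty. It is union-closed provided no two members \(A,B\ne U\) have \(A\cup B=U\); otherwise we cannot delete \(U\). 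So instead we will delete a \emph{minimal} member (an atom) or another suitable member.

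\textbf{Parity and the frequency bound.} Suppose first that \(\abs{\F}=2t+1\) is odd. A counterexample has every frequency at most \(\lfloor(2t+1)/2\rfloor=t\). Delete a minimal member \(M\) of \(\F\). The family \(\F\setminus\set{M}\) is union-closed, because \(M\) is never a union of two other members (such a union would strictly contain a smaller member, contradicting the minimality of \(M\) unless both equal \(M\)). It is nonempty when \(\abs{\F}\ge2\), and its height is at most \(h\). It has \(2t\) members, and every frequency is at most \(t\), so it is again a counterexample with fewer members, which is a contradiction. The case \(\abs{\F}=1\) is not a counterexample, since the single nonempty member contains an element of frequency \(1>1/2\). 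Hence \(\abs{\F}=2t\), and being a counterexample means \(f(x)\le t\) for all \(x\).

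\textbf{Three critical coordinates.} We again delete a minimal member \(M\) to obtain \(\F'\) with \(2t-1\) members. By minimality \(\F'\) is not a counterexample, so some \(x\) has frequency \(f'(x)>(2t-1)/2\), that is, \(f'(x)\ge t\). Since \(f'(x)\le f(x)\le t\), this forces \(f(x)=t\) and \(x\notin M\). So \(K\neq\varnothing\), and moreover every minimal member \(M\) misses some critical coordinate, i.e.\ \(K\not\subseteq M\).

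To get \(\abs{K}\ge3\), we instead delete a different removable member. Any member \(A\) that is join-irreducible in \(\F\) (not the union of two other members) can be deleted while keeping union-closedness and the height bound. Repeating the argument shows \(K\not\subseteq A\) for every join-irreducible \(A\ne\)... The family \(\F\) is generated by its join-irreducibles, so \(U\) is a union of them.

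Suppose \(K=\set{x}\). Take the union of all join-irreducibles containing \(x\); these avoid nothing critical. Simpler: every join-irreducible avoids \(x\), so \(x\) lies in no generator and hence in no member, contradicting \(f(x)=t\ge1\).

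Suppose \(K=\set{x,y}\). Every join-irreducible misses \(x\) or \(y\). We use the four-cell count: \(N_{11}+N_{10}=t=N_{11}+N_{01}\) and \(N_{00}+N_{01}+N_{10}+N_{11}=2t\) give \(N_{00}=N_{11}\). The map \(A\mapsto A\cup B_0\), where \(B_0\) is a fixed member containing both \(x\) and \(y\), or rather the injection \(A\mapsto A\cup X\cup Y\) from the \(00\)-cell into the \(11\)-cell, with \(X,Y\) generators containing \(x\) and \(y\), needs care. Instead, delete a join-irreducible \(A\) in the \(01\)-cell or the \(10\)-cell. Such an \(A\) exists because \(x\) must lie in some generator, and that generator misses \(y\). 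Deleting it yields a family of size \(2t-1\) in which one of \(x,y\) drops to \(t-1\), so the other must have frequency \(t\) and be absent from \(A\). Combined with the analogous deletion from the other cell, we need an irreducible in the \(00\)-cell or a contradiction. This two-critical case is the step I expect to be the main obstacle. I would handle it by choosing a generator \(A\ni x\) that is minimal among members containing \(x\). Deleting it lowers \(f(x)\), so \(y\) must then be the element of frequency \(t\); this is consistent. I would then pair it with a \(00\)-cell argument via \(N_{00}=N_{11}\), adapting Roberts–Simpson.
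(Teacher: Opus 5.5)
\textbf{There is a gap: the proposal never establishes \(\abs{K}\ge3\).} Your parity argument and your proof that \(K\ne\varnothing\) are correct and match the paper: both delete an inclusion-minimal member. Your extra observation is also valid. Deleting any join-irreducible member \(A\) leaves a union-closed family of \(2t-1\) members, so \(K\not\subseteq A\), and this rules out \(\abs{K}=1\).

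The case \(K=\set{x,y}\) is left open, and the tools you sketch cannot close it. Single deletions always produce families of odd size \(2t-1\). There a frequency of \(t\) already beats half, so \(x\) or \(y\) can always serve as the witness. You get constraints such as ``every join-irreducible misses \(x\) or \(y\)'', but no contradiction. The identity \(N_{00}=N_{11}\) and the appeal to Roberts--Simpson are never turned into an argument.

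\textbf{The missing idea is to delete two members at once.} The remaining family then has even size \(2t-2\), and not being a counterexample requires a coordinate of frequency at least \(t\) after the removal.
\begin{itemize}
\item Take \(A_x\) inclusion-minimal among members containing \(x\), and \(A_y\) inclusion-minimal among members containing \(y\).
\item Each is join-irreducible: if \(A_x=B\cup C\), one of \(B,C\) contains \(x\) and lies inside \(A_x\). So by your own observation, \(y\notin A_x\) and \(x\notin A_y\); in particular \(A_x\ne A_y\).
\item \(\F\setminus\set{A_x,A_y}\) is union-closed, has height at most \(h\), and has \(2t-2\) members.
\item For this family to be nonempty you also need \(t\ge2\). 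This holds because a two-member union-closed family of nonempty sets is a chain, so it is never a counterexample.
\item In the smaller family every coordinate has frequency at most \(t-1\). The coordinates \(x\) and \(y\) each lost an occurrence, and every other coordinate was already at most \(t-1\). So it is a smaller counterexample, which is a contradiction.
\end{itemize}

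The paper avoids join-irreducibles altogether and runs this double deletion twice. It first deletes \(A_0,A_x\) to obtain a critical \(y\notin A_0\cup A_x\). It then deletes \(A_x,A_y\) to force a third critical coordinate \(z\ne x,y\).
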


\begin{proof}
Suppose first that \(\abs{\F}=2t+1\).  Since \(\F\) is a counterexample, every frequency is at most \(t\).  Let \(A_0\) be an inclusion-minimal member of \(\F\).  The family \(\F\setminus\set{A_0}\) is union-closed: if the union of two remaining members were \(A_0\), then both would be proper members of \(\F\) contained in \(A_0\), contrary to minimality.  The smaller family has \(2t\) members and no frequency exceeding \(t\), so it is again a counterexample of height at most \(h\), a contradiction.  Thus \(\abs{\F}=2t\), and necessarily \(f(x)\le t\) for all \(x\).  A counterexample cannot have \(t=1\): with two distinct nonempty members, union-closure forces one to contain the other, and every element of the smaller member then has frequency two.  Hence \(t\ge2\).

Again choose an inclusion-minimal member \(A_0\).  The family \(\F\setminus\set{A_0}\) has \(2t-1\) members and, by minimality of \(\F\), has an element occurring in at least \(t\) of them.  Hence there is an element \(x\) with
\[
f(x)=t,
\qquad x\notin A_0.
\]
Let \(A_x\) be inclusion-minimal among the members containing \(x\).  The family
\(
\F\setminus\set{A_0,A_x}
\)
is union-closed.  Indeed, a missing union could only equal \(A_0\) or \(A_x\); the first possibility contradicts minimality of \(A_0\), and in the second one of the two unionands would be a proper member containing \(x\) inside \(A_x\).  The remaining family has \(2t-2\) members, so it contains an element in at least \(t\) members.  Consequently there is a critical element \(y\ne x\) with
\[
y\notin A_0\cup A_x.
\]
Choose \(A_y\) inclusion-minimal among members containing \(y\).  The family
\(
\F\setminus\set{A_x,A_y}
\)
is union-closed by the same argument and has \(2t-2\) members.  It therefore contains an element in at least \(t\) members.  This element cannot be \(x\) or \(y\), since a member containing each of them was deleted.  We obtain a third critical element \(z\), proving \(\abs{K}\ge3\).
\end{proof}

\subsection{Maximal avoiding sets}

For each \(a\in U\), define
\begin{equation}\label{eq:def-Ca}
C_a:=\bigcup\set{A\in\F:a\notin A}.
\end{equation}
Since \(f(a)\le t<2t\), the subfamily in \cref{eq:def-Ca} is nonempty.  Finite union-closure gives \(C_a\in\F\).

\begin{lemma}[Avoiding-set properties]\label{lem:Ca-properties}
Let \(\F\) be an \(h\)-minimal counterexample.
\begin{enumerate}[label=\textup{(\roman*)},leftmargin=2.3em]
\item \(C_a\) is the largest member of \(\F\) avoiding \(a\), and \(a\notin C_a\).
\item If \(a\ne b\), then \(C_a\ne C_b\).
\item If \(x\in K\) and \(a\ne x\), then \(x\in C_a\).
\end{enumerate}
\end{lemma}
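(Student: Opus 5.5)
Part (i) follows directly from the definition \cref{eq:def-Ca}. Every member avoiding \(a\) is contained in \(C_a\), and \(C_a\in\F\) by finite union-closure. Since \(a\) lies in none of the sets whose union is taken, \(a\notin C_a\). So \(C_a\) is a member of \(\F\) avoiding \(a\) that contains every such member, i.e.\ it is the largest one.

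For part (ii), suppose \(a\ne b\) and \(C_a=C_b\). By part (i), \(a\notin C_a\) and \(b\notin C_b=C_a\). Any member \(A\) avoiding \(a\) satisfies \(A\subseteq C_a\), so it also avoids \(b\); symmetrically, every member avoiding \(b\) avoids \(a\). Hence \(\F_a=\F_b\). This contradicts \cref{lem:separating}, since an \(h\)-minimal counterexample is separating.

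Part (iii) is where the counting enters, and I expect it to be the only substantive step. Let \(x\in K\), \(a\ne x\), and suppose \(x\notin C_a\). By part (i), every member avoiding \(a\) lies inside \(C_a\) and so avoids \(x\); that is, \(\F\setminus\F_a\subseteq\F\setminus\F_x\). Counting complements gives \(2t-f(a)\le 2t-f(x)=t\), so \(f(a)\ge t\). By \cref{lem:critical-three} we also have \(f(a)\le t\), so equality holds throughout and \(\F\setminus\F_a=\F\setminus\F_x\). Then \(\F_a=\F_x\), again contradicting \cref{lem:separating}. Hence \(x\in C_a\).

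The obstacle in (iii) is only to arrange the inclusion of complements in the right direction so that the frequency bound \(f(a)\le t\) from \cref{lem:critical-three} forces equality rather than merely an inequality. Once that is done, separation finishes the argument, exactly as in (ii).
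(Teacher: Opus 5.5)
Your proof is correct and follows essentially the same route as the paper. Part (i) comes from the definition. In (ii) you show that $C_a=C_b$ forces $\F_a=\F_b$, whereas the paper picks a separating member directly; the argument is otherwise the same. In (iii), your complement inclusion is equivalent to $\F_x\subseteq\F_a$, so $t=f(x)\le f(a)\le t$ forces equality and contradicts separation.
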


\begin{proof}
Part (i) follows from the definition.  For (ii), if a member contains \(a\) but not \(b\), then it is contained in \(C_b\), so \(a\in C_b\); this is incompatible with \(C_a=C_b\), since \(a\notin C_a\).  Separation guarantees that one of the two possible directions exists.

For (iii), suppose \(x\notin C_a\).  Then no member avoiding \(a\) contains \(x\), equivalently \(\F_x\subseteq\F_a\).  Since \(f(x)=t\ge f(a)\), either the inclusion contradicts \(f(a)<t\), or it is equality and contradicts separation.
\end{proof}

\subsection{A height-preserving Lo Faro--Hu bound}

The next estimate is the only global size bound needed in the height-four proof.  We give a self-contained adaptation of Hu's staircase argument~\cite{Hu}; the same estimate is related to the minimal-counterexample bounds of Lo Faro~\cite{LoFaro} and Roberts--Simpson~\cite{RobertsSimpson}.

\begin{lemma}[Hu staircase]\label{lem:hu-staircase}
Let \(\A\) be a finite separating union-closed family on
\(
U=\set{u_1,\dots,u_n}
\), indexed so that
\[
f_{\A}(u_1)\le f_{\A}(u_2)\le\cdots\le f_{\A}(u_n).
\]
For \(i<n\), let \(C_i\) be the union of all members avoiding \(u_i\), and set
\[
\mathcal S=\set{U,C_1,\dots,C_{n-1}}.
\]
Then the members of \(\mathcal S\) are distinct and all contain \(u_n\).  Moreover, if \(\mathcal B\subseteq\A\) is a subfamily all of whose members avoid \(u_n\), and \(\mathcal B\) contains a nonempty set, then among the elements of maximum frequency in \(\mathcal B\) there is an element occurring in exactly \(n-1\) members of \(\mathcal S\).
\end{lemma}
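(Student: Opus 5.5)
The lemma will follow from \cref{lem:Ca-properties}-style inclusion arguments together with a single extremal choice of index. The recurring fact is this: for distinct coordinates \(a,b\), \(a\notin C_b\) holds exactly when no member contains \(a\) while avoiding \(b\), i.e.\ exactly when \(\F_a\subseteq\F_b\) (here with \(\A\) in place of \(\F\)). Under separation this inclusion forces \(f_\A(a)<f_\A(b)\), because equal frequencies would make the occurrence vectors equal.

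\emph{First part.} For \(i<n\) I first check that \(C_i\) is well defined, i.e.\ that some member avoids \(u_i\). If every member contained \(u_i\), then \(\A_{u_n}\subseteq\A_{u_i}\). Since \(f(u_i)\le f(u_n)\), this inclusion is an equality, which contradicts separation.

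The same inclusion argument shows \(u_n\in C_i\). If \(u_n\notin C_i\), then \(\A_{u_n}\subseteq\A_{u_i}\), which again forces \(\A_{u_n}=\A_{u_i}\) and contradicts separation.

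Distinctness goes as follows. We have \(C_i\ne U\) because \(u_i\notin C_i\). For \(i\ne j\), separation gives a member containing one of \(u_i,u_j\) but not the other, say containing \(u_i\) and avoiding \(u_j\). That member lies inside \(C_j\), so \(u_i\in C_j\) while \(u_i\notin C_i\). Hence \(C_i\ne C_j\).

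\emph{Second part.} First I count, for \(i<n\), how many members of \(\mathcal S\) contain \(u_i\):
\[
1+\#\set{j<n,\ j\ne i:\ u_i\in C_j}.
\]
The leading \(1\) accounts for \(U\), and \(C_i\) is excluded since \(u_i\notin C_i\). So the count is exactly \(n-1\) precisely when \(u_i\in C_j\) for every \(j<n\) with \(j\ne i\).

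Next I note which \(j\) can fail this. If \(u_i\notin C_j\), then \(\A_{u_i}\subseteq\A_{u_j}\), so \(f(u_i)<f(u_j)\) and therefore \(j>i\) by the ordering. The key additional observation is that this inclusion also holds inside \(\mathcal B\): every member of \(\mathcal B\) containing \(u_i\) also contains \(u_j\), so \(f_{\mathcal B}(u_j)\ge f_{\mathcal B}(u_i)\).

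Now make the extremal choice. Let \(m\) be the maximum frequency in \(\mathcal B\). Since \(\mathcal B\) contains a nonempty set, \(m\ge1\). Since \(f_{\mathcal B}(u_n)=0\), every element of maximum frequency has index \(<n\). Among the elements of maximum frequency in \(\mathcal B\), pick \(u_i\) with the largest index \(i\).

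Suppose \(u_i\notin C_j\) for some \(j<n\), \(j\ne i\). By the observation above, \(j>i\) and \(f_{\mathcal B}(u_j)\ge m\). So \(u_j\) also has maximum frequency in \(\mathcal B\), with a larger index than \(u_i\), contradicting the choice of \(i\). Therefore \(u_i\in C_j\) for all such \(j\), and \(u_i\) lies in exactly \(n-1\) members of \(\mathcal S\).

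The only delicate step is the extremal choice: one must pick the largest index rather than an arbitrary maximizer. That choice is what makes the inclusion \(\A_{u_i}\subseteq\A_{u_j}\), which transfers to \(\mathcal B\), yield a contradiction.
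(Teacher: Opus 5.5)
Your proof is correct and takes essentially the paper's route: the equivalence \(u_i\notin C_j\iff\A_{u_i}\subseteq\A_{u_j}\), together with separation, forces \(j>i\), and this domination passes to \(\mathcal B\). The only difference is cosmetic: you take the largest-index maximizer of \(f_{\mathcal B}\) directly, whereas the paper starts from an arbitrary maximizer and follows the domination chain to its terminal element, which is the same extremal argument.
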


\begin{proof}
If \(i<j\), then \(u_j\in C_i\).  Otherwise every member containing \(u_j\) would also contain \(u_i\), so
\(
\A_{u_j}\subseteq\A_{u_i}
\); the frequency ordering would force equality, contradicting separation.  It follows that all members of \(\mathcal S\) contain \(u_n\), and they are distinct because \(u_j\in C_i\setminus C_j\) whenever \(i<j<n\), while \(U\ne C_i\).

Fix \(i<n\).  The element \(u_i\) belongs to \(U\) and to every \(C_j\) with \(j<i\), while it does not belong to \(C_i\).  If it occurs fewer than \(n-1\) times in \(\mathcal S\), then it also misses some \(C_j\) with \(i<j<n\).  The relation \(u_i\notin C_j\) means that every member containing \(u_i\) also contains \(u_j\); in other words, \(u_j\) dominates \(u_i\).  Iterating strictly increases the index and therefore terminates at an element \(u_k\), with \(k<n\), that occurs exactly \(n-1\) times in \(\mathcal S\).

Choose an element of maximum positive frequency in \(\mathcal B\).  It is not \(u_n\).  Along the domination chain its frequency in \(\mathcal B\) cannot decrease; hence every element reached still has maximum frequency in \(\mathcal B\).  The terminal element has the required \(\mathcal S\)-frequency.
\end{proof}

\begin{proposition}[Restricted minimal-counterexample bound]\label{prop:lower-bound}
If \(\F\) is an \(h\)-minimal counterexample, \(\abs{\F}=2t\), and \(\abs{U}=n\), then
\begin{equation}\label{eq:t-lower}
\boxed{t\ge 2n-1.}
\end{equation}
\end{proposition}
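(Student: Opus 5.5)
The plan is to apply \cref{lem:hu-staircase} to \(\A=\F\) itself, with the last coordinate \(u_n\) chosen critical, and to take \(\mathcal B\) to be the fiber avoiding \(u_n\). Minimality of \(\F\) then bounds the frequency of the element that the staircase produces.

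\emph{Setup.} By \cref{lem:separating}, \(\F\) is separating. By \cref{lem:critical-three}, \(\abs{\F}=2t\), every frequency is at most \(t\), and \(K\ne\varnothing\). Fix \(x\in K\). Order \(U\) by nondecreasing frequency so that \(u_n=x\); this is allowed because \(f(x)=t\) is the maximum frequency, and ties may be broken arbitrarily. Set
\[
\mathcal B=\set{A\in\F:x\notin A},
\qquad \abs{\mathcal B}=2t-t=t.
\]
The family \(\mathcal B\) is union-closed, since a union of two sets avoiding \(x\) avoids \(x\). Its members are nonempty, it is nonempty because \(t\ge2\), and \(H(\mathcal B)\le H(\F)\le h\). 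Since \(\abs{\mathcal B}=t<2t\), minimality of \(\F\) means \(\mathcal B\) is not a counterexample. Hence its maximum frequency is at least \(\lfloor t/2\rfloor+1\).

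\emph{Staircase step.} By \cref{lem:hu-staircase}, some element \(y\) of maximum \(\mathcal B\)-frequency lies in exactly \(n-1\) members of the family \(\mathcal S=\set{U,C_1,\dots,C_{n-1}}\). These \(n\) members are distinct and all contain \(x\), so \(\mathcal S\subseteq\F_x\). In particular, the members of \(\mathcal S\) are disjoint from \(\mathcal B\). Counting the occurrences of \(y\) in \(\mathcal B\) and in \(\mathcal S\) separately gives
\[
t\ \ge\ f(y)\ \ge\ f_{\mathcal B}(y)+(n-1)\ \ge\ \Bigl\lfloor \tfrac t2\Bigr\rfloor+1+n-1 .
\]
This rearranges to \(\lceil t/2\rceil\ge n\), that is, \(t\ge 2n-1\).

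\emph{Main obstacle.} The only delicate point is checking that \(\mathcal B\) is a legitimate smaller instance, so that the height-restricted minimality of \(\F\) applies to it. This needs three facts. First, \(\mathcal B\) is union-closed. Second, \(\mathcal B\) is empty-set-free with height at most \(h\), which holds because it is a subfamily of \(\F\). Third, \(\mathcal B\) has strictly fewer members than \(\F\). Beyond that, we must confirm that the hypotheses of \cref{lem:hu-staircase} hold, namely that \(\F\) is separating and that the frequency ordering places \(x\) last. Both are arranged in the setup.
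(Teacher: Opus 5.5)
Your proof is correct and is essentially the paper's argument. The paper applies \cref{lem:hu-staircase} to \(\F\cup\set{\varnothing}\), taking \(\mathcal B\) to be the \((t+1)\)-member fiber avoiding \(u_n\) (which contains \(\varnothing\)), and gets an element occurring at least \(\lceil (t+1)/2\rceil\) times; you stay in the empty-set-free family \(\F\) with the \(t\)-member fiber and get \(\lfloor t/2\rfloor+1\), which is the same integer, so skipping the adjoined empty set is a harmless and slightly cleaner simplification (the sets \(C_i\) still lie in \(\F\) because each coordinate is avoided by some member).
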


\begin{proof}
Adjoin the empty set and write
\[
\A=\F\cup\set{\varnothing},
\qquad \abs{\A}=2t+1.
\]
Frequencies are unchanged, and \(\A\) is separating by \cref{lem:separating}.  Order the ground elements by frequency and choose the last element \(u_n\) to be critical, so \(f_{\A}(u_n)=t\).  The subfamily
\[
\A_{\overline{u_n}}=\set{A\in\A:u_n\notin A}
\]
has \(t+1\) members and contains \(\varnothing\).

This subfamily satisfies the standard union-closed conclusion.  Indeed, if no element occurred in at least half of its \(t+1\) members, then after removing \(\varnothing\) every frequency would be at most \(\lfloor t/2\rfloor\).  The resulting union-closed family would have exactly \(t\) nonempty members, height at most \(h\), and no strict-majority element.  It would therefore be a counterexample smaller than \(\F\), contrary to minimality.

Apply \cref{lem:hu-staircase} with \(\mathcal B=\A_{\overline{u_n}}\).  There is an element \(a\ne u_n\) that occurs in at least
\(
\lceil (t+1)/2\rceil
\)
members of \(\mathcal B\) and in exactly \(n-1\) members of the staircase \(\mathcal S\).  Every member of \(\mathcal S\) contains \(u_n\), whereas every member of \(\mathcal B\) avoids it, so the two subfamilies are disjoint.  Since \(f_{\A}(a)\le t\),
\[
\left\lceil\frac{t+1}{2}\right\rceil+n-1\le t.
\]
Using \(\lceil (t+1)/2\rceil\ge(t+1)/2\) gives \(t\ge2n-1\).
\end{proof}

\begin{remark}[Classical form of the bound]\label{rem:classical-bound}
For \(\F^+=\F\cup\set{\varnothing}\), the estimate becomes
\[
\abs{\F^+}=2t+1\ge4n-1.
\]
Thus \cref{prop:lower-bound} is exactly the familiar smallest-counterexample lower bound, written in the parity normalization used here.
\end{remark}

\subsection{The two-coordinate parity identity}

For distinct \(x,y\in U\), write
\begin{equation}\label{eq:Nij}
N_{ij}^{xy}
=
\abs{\set{A\in\F:
\1_{x\in A}=i,\ \1_{y\in A}=j}},
\qquad i,j\in\set{0,1}.
\end{equation}
When the pair is fixed, we suppress the superscript.

\begin{lemma}[Critical-pair parity]\label{lem:pair-parity}
If \(x,y\in K\) are distinct, then
\begin{equation}\label{eq:N00=N11}
\boxed{N_{00}=N_{11}.}
\end{equation}
Moreover,
\begin{equation}\label{eq:N11-lower}
N_{11}\ge n-1.
\end{equation}
\end{lemma}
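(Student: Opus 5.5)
First, the identity \cref{eq:N00=N11} is pure bookkeeping. Since \(x,y\in K\), we have \(f(x)=f(y)=t\), so \(N_{10}+N_{11}=t\) and \(N_{01}+N_{11}=t\). Summing all four cells gives \(N_{00}+N_{01}+N_{10}+N_{11}=2t\). Subtracting the two frequency equations from this yields \(N_{00}-N_{11}=0\). This is the empty-set-free count emphasized in the introduction; no \(\varnothing\) is present in the \(00\)-cell.

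For \cref{eq:N11-lower}, I will bound \(N_{11}\) from below by exhibiting \(n-1\) distinct members containing both \(x\) and \(y\). The natural candidates are the maximal avoiding sets \(C_a\) for \(a\in U\setminus\set{x,y}\), together with \(U\) itself.

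\Cref{lem:Ca-properties}(iii) applies to each such \(a\). Because \(x\in K\) and \(a\ne x\), we get \(x\in C_a\); likewise \(y\in C_a\). Part (ii) of the same lemma shows these \(n-2\) sets are pairwise distinct. Each is different from \(U\), since \(a\notin C_a\). The set \(U\) is a member of \(\F\) by union-closure and contains both \(x\) and \(y\). Together these give \(n-2+1=n-1\) distinct members in the \(11\)-cell, so \(N_{11}\ge n-1\).

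No real obstacle is expected. The only points to check are that every \(C_a\) is genuinely a member of \(\F\), which follows because the avoiding subfamily is nonempty when \(f(a)\le t<2t\), and that the separation hypothesis used in \cref{lem:Ca-properties} holds, which is \cref{lem:separating}.

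As an alternative route, one could invoke the staircase of \cref{lem:hu-staircase} with \(u_n=x\) to obtain \(n\) sets containing \(x\). The approach above is more direct because it places \(y\) in every counted set automatically.
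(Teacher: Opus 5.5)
Your proposal is correct and follows essentially the same route as the paper: the identity \(N_{00}=N_{11}\) comes from the two critical marginals and the total count \(2t\), and the bound \(N_{11}\ge n-1\) comes from \(U\) together with the \(n-2\) distinct sets \(C_a\), \(a\in U\setminus\set{x,y}\), via \cref{lem:Ca-properties}(ii),(iii). You also spell out the checks the paper leaves implicit: \(C_a\in\F\), \(C_a\ne U\) because \(a\notin C_a\), and separation from \cref{lem:separating}.
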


\begin{proof}
The two marginal identities and the total count are
\[
N_{10}+N_{11}=t,
\qquad
N_{01}+N_{11}=t,
\qquad
N_{00}+N_{01}+N_{10}+N_{11}=2t.
\]
They imply \(N_{10}=N_{01}\) and then \(N_{00}=N_{11}\).

By \cref{lem:Ca-properties}, the sets
\[
U
\quad\text{and}\quad
C_a\quad(a\in U\setminus\set{x,y})
\]
are \(n-1\) distinct members containing both \(x\) and \(y\).  Hence \(N_{11}\ge n-1\).
\end{proof}

\begin{remark}[Where the extra one goes]\label{rem:parity-firewall}
In \(\F^+=\F\cup\set{\varnothing}\), the critical marginals remain \(t\), but the \(00\)-cell gains the empty set.  Thus
\[
N_{00}^{+}=N_{00}+1=N_{11}+1.
\]
The identity with \(+1\) is correct only in the odd standard family \(\F^+\), not in the even family \(\F\) used below.
\end{remark}

\section{The height-four theorem}

For a critical pair \(x,y\), define the double-avoidance fiber
\begin{equation}\label{eq:Gxy}
\G_{xy}:=\set{A\in\F:x,y\notin A},
\qquad
T_{xy}:=\bigcup\G_{xy}.
\end{equation}
It is a union-closed subfamily and \(T_{xy}\in\G_{xy}\).

\begin{lemma}[Height-two extremal bound]\label{lem:height2-extremal}
Let \(\G\) be a finite union-closed family with ground set
\(
T=\bigcup\G
\), \(\abs{T}=r\), and \(H(\G)\le2\).  Then
\begin{equation}\label{eq:height2-size}
\abs{\G}\le r+1.
\end{equation}
If equality holds, then
\begin{equation}\label{eq:height2-equality}
\G=\set{T}\cup\set{T\setminus\set{a}:a\in T}.
\end{equation}
\end{lemma}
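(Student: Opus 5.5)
The plan is to show that, apart from its top element \(T\), the family \(\G\) is an antichain of sets whose complements in \(T\) are pairwise disjoint and nonempty. Both \cref{eq:height2-size} and the equality description \cref{eq:height2-equality} then follow from a counting argument.

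First, since \(\G\) is finite, nonempty and union-closed, \(T=\bigcup\G\in\G\), and \(T\) contains every member. Write \(\G\setminus\set{T}=\set{A_1,\dots,A_m}\) with the \(A_i\) distinct. Each \(A_i\subsetneq T\). If \(A_i\subsetneq A_j\) for some \(i\ne j\), then \(A_i\subsetneq A_j\subsetneq T\) is a strict chain of three members, contradicting \(H(\G)\le2\). Hence \(\set{A_1,\dots,A_m}\) is an antichain.

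Next, fix \(i\ne j\) and consider \(A_i\cup A_j\in\G\). Because the \(A\)'s form an antichain, \(A_i\cup A_j\) strictly contains \(A_i\). It therefore cannot equal any \(A_k\): if \(k\ne i\) this would give \(A_i\subsetneq A_k\), and if \(k=i\) it contradicts the strict containment. So \(A_i\cup A_j=T\). Now pass to complements \(D_i:=T\setminus A_i\). These are nonempty because \(A_i\ne T\). The identity \(A_i\cup A_j=T\) says exactly that \(D_i\cap D_j=\varnothing\). Thus \(D_1,\dots,D_m\) are pairwise disjoint nonempty subsets of \(T\), which gives \(m\le r\) and hence \(\abs{\G}=m+1\le r+1\). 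This argument does not care whether \(\varnothing\in\G\); that case simply gives \(D_i=T\), which forces \(m\le1\).

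For the equality case, suppose \(m=r\). Then \(r\) pairwise disjoint nonempty subsets of an \(r\)-element set must all be singletons and must partition \(T\). So \(\set{D_i}=\set{\set{a}:a\in T}\), that is, \(\set{A_i}=\set{T\setminus\set{a}:a\in T}\), which is \cref{eq:height2-equality}. The degenerate case \(r=0\) gives \(\G=\set{\varnothing}=\set{T}\) and is consistent with the formula. No step is a real obstacle. The only point requiring care is the observation that a union of two distinct non-top members is forced to be \(T\); this uses both the height bound (through the antichain property) and union-closure.
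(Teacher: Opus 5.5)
Your proposal is correct and follows essentially the same route as the paper: the non-top members form an antichain, any two of them have union \(T\), so their complements in \(T\) are pairwise disjoint and nonempty, which gives \(\abs{\G}\le r+1\) and forces singleton complements in the equality case. Your justification that \(A_i\cup A_j=T\), via the antichain property, is a harmless variant of the paper's direct three-chain argument.
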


\begin{proof}
The set \(T\) is the maximum member.  Two distinct proper members cannot be comparable, since either one together with \(T\) would form a three-chain.  Their union must equal \(T\); otherwise either proper member, their union, and \(T\) form a three-chain.  Consequently the nonempty complements \(T\setminus A\), for proper \(A\in\G\), are pairwise disjoint.  There are at most \(r\) of them, proving \cref{eq:height2-size}.  In the equality case there are \(r\) pairwise disjoint nonempty subsets of an \(r\)-element set, so each is a singleton, giving \cref{eq:height2-equality}.
\end{proof}

\begin{proof}[Proof of \cref{thm:main-height4}]
Assume for contradiction that a counterexample of height at most four exists, and let \(\F\) be a \(4\)-minimal one.  By \cref{lem:critical-three,prop:lower-bound},
\begin{equation}\label{eq:h4-setup}
\abs{\F}=2t,
\qquad
\abs{K}\ge3,
\qquad
 t\ge2n-1.
\end{equation}
Fix distinct \(x,y\in K\).  By \cref{lem:pair-parity},
\begin{equation}\label{eq:Gxy-lower}
\abs{\G_{xy}}=N_{00}=N_{11}\ge n-1.
\end{equation}
Every \(A\in\G_{xy}\) satisfies
\[
A\subsetneq C_x\subsetneq U.
\]
Indeed, \(A\subseteq C_x\), the first containment is strict because \(y\in C_x\setminus A\), and the second is strict because \(x\notin C_x\).  A three-chain in \(\G_{xy}\) would therefore extend by \(C_x,U\) to a five-chain in \(\F\).  Hence
\begin{equation}\label{eq:Gxy-height2}
H(\G_{xy})\le2.
\end{equation}
Since \(T_{xy}\subseteq U\setminus\set{x,y}\), \cref{lem:height2-extremal} and \cref{eq:Gxy-lower} give
\[
n-1
\le \abs{\G_{xy}}
\le \abs{T_{xy}}+1
\le n-1.
\]
All inequalities are equalities.  Thus
\begin{equation}\label{eq:Gxy-exact}
T_{xy}=U\setminus\set{x,y},
\qquad
\G_{xy}
=
\set{T_{xy}}
\cup
\set{T_{xy}\setminus\set{a}:a\in T_{xy}}.
\end{equation}
If \(n=3\), \cref{eq:Gxy-exact} forces \(\varnothing\in\F\), already a contradiction.  Hence we may assume \(n\ge4\).

Choose three distinct critical elements \(x,y,z\).  Let
\[
M_{ijk}
=
\abs{\set{A\in\F:
\1_{x\in A}=i,\ \1_{y\in A}=j,\ \1_{z\in A}=k}},
\qquad i,j,k\in\set{0,1}.
\]
Within the \(xy\)-avoidance fiber in \cref{eq:Gxy-exact}, exactly one member, namely
\(
U\setminus\set{x,y,z}
\), avoids \(z\).  Therefore
\[
M_{000}=1,
\qquad
M_{001}=n-2.
\]
Applying the same statement to the pairs \(y,z\) and \(x,z\) yields
\begin{equation}\label{eq:lower-cube}
M_{100}=M_{010}=M_{001}=n-2.
\end{equation}

The \(xy\)-containing cell has exactly \(n-1\) members, and the \(n-1\) members listed in the proof of \cref{lem:pair-parity} exhaust it:
\[
U
\quad\text{and}\quad
C_a\quad(a\notin\set{x,y}).
\]
Exactly one of them, namely \(C_z\), avoids \(z\).  Hence
\[
M_{110}=1,
\qquad
M_{111}=n-2.
\]
By symmetry,
\begin{equation}\label{eq:upper-cube}
M_{101}=M_{011}=M_{110}=1.
\end{equation}
Equivalently, the eight cells are forced to be
\[
\begin{array}{c|cccc}
 & (x,y)=(0,0) & (1,0) & (0,1) & (1,1)\\
\midrule
z=0 & 1 & n-2 & n-2 & 1\\
z=1 & n-2 & 1 & 1 & n-2
\end{array}
\]
It follows that
\[
\begin{aligned}
f(x)
&=M_{100}+M_{101}+M_{110}+M_{111}\\
&=(n-2)+1+1+(n-2)=2n-2.
\end{aligned}
\]
Since \(x\) is critical, \(f(x)=t\), contradicting \(t\ge2n-1\) in \cref{eq:h4-setup}.  This proves the theorem.
\end{proof}

\section{Height five: critical coatoms and pair dichotomy}

For the remainder of the paper, suppose that a counterexample with height at most five exists, and let \(\F\) be a \(5\)-minimal counterexample.  We retain the notation
\begin{equation}\label{eq:h5-setup}
\abs{\F}=2t,
\qquad
f(a)\le t,
\qquad
K=\set{a:f(a)=t},
\qquad
\abs{K}\ge3,
\qquad
 t\ge2n-1.
\end{equation}
Define the deficit
\begin{equation}\label{eq:deficit}
\delta_a:=t-f(a)\ge0.
\end{equation}

For distinct critical elements \(x,y\), put
\begin{equation}\label{eq:d-r}
d_{xy}:=\abs{\G_{xy}}=N_{00}=N_{11},
\qquad
r_{xy}:=N_{10}=N_{01}=t-d_{xy}.
\end{equation}
Separation implies \(r_{xy}\ge1\).

\begin{theorem}[Critical coatoms]\label{thm:critical-coatom}
For every critical element \(x\in K\),
\begin{equation}\label{eq:critical-coatom}
\boxed{C_x=U\setminus\set{x}.}
\end{equation}
\end{theorem}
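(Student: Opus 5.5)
The plan is to argue by contradiction. Assume that for some \(x\in K\) the member \(C_x\) is strictly smaller than \(U\setminus\set{x}\), and pick \(a\in U\setminus\set{x}\) with \(a\notin C_x\). Then every member avoiding \(x\) also avoids \(a\), so \(\F_a\subseteq\F_x\). By separation (\cref{lem:separating}) and \(f(a)\le t=f(x)\), this forces \(a\notin K\) and \(\F_a\subsetneq\F_x\). The aim is to exploit the height bound: the extra room between \(C_x\) and \(U\) should push the fiber \(\G:=\set{A\in\F:x\notin A}\), which has exactly \(t\) members, down to height at most three.

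\textbf{Step 1 (an intermediate member).} The candidate is \(C_a\). By \cref{lem:Ca-properties}(iii), \(x\in C_a\), and \(a\notin C_a\). Since \(C_x\) avoids \(a\), it is contained in \(C_a\), the largest member avoiding \(a\). This containment is strict because \(x\in C_a\setminus C_x\), and \(C_a\subsetneq U\) because \(a\notin C_a\). Hence
\[
C_x\subsetneq C_a\subsetneq U.
\]
Every member of \(\G\) lies in \(C_x\). So any chain in \(\G\) ending below \(C_x\) extends by \(C_x,C_a,U\), and a chain ending at \(C_x\) extends by \(C_a,U\). Together with \(H(\F)\le5\), this gives \(H(\G)\le3\).

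\textbf{Step 2 (a frequency count).} Apply \cref{thm:main-height4} to the nonempty, empty-set-free, union-closed family \(\G\) of height at most three. This yields an element \(b\ne x\) with \(f_{\G}(b)\ge\lceil (t+1)/2\rceil\). Next, count members containing both \(x\) and \(b\). These include \(U\) and the sets \(C_c\) for \(c\notin\set{x,b}\), which are distinct by \cref{lem:Ca-properties}(ii) and contain \(x\) by \cref{lem:Ca-properties}(iii). That gives \(n-1\) such members, all disjoint from \(\G\). Hence
\[
f(b)\ge\left\lceil\frac{t+1}{2}\right\rceil+n-1.
\]
Combined with \(f(b)\le t\), this reproduces exactly \(t\ge2n-1\). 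So it matches \cref{prop:lower-bound} but does not yet contradict it.

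\textbf{Step 3 (the main obstacle).} The crux is to extract one more unit. I see three routes, in order of preference:
\begin{enumerate}[label=\textup{(\alph*)},leftmargin=2.5em]
\item \emph{Exploit the lost height.} The height-four proof uses an extremal fiber of height two; here \(\G\) has height three inside the set \(C_x\), which has at most \(n-2\) points. I would first try to prove a Lo Faro--Hu type bound for this height-three fiber that is strictly better, either through the staircase of \cref{lem:hu-staircase} applied inside \(\G\cup\set{\varnothing}\), or through an extremal analysis analogous to \cref{lem:height2-extremal}.
\item \emph{Find an extra member.} Failing that, look for an additional member containing \(x\) and \(b\) that is distinct from \(U\) and from every \(C_c\). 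Natural candidates are unions \(B\cup A_x\), with \(B\in\G\) a \(b\)-containing member and \(A_x\) inclusion-minimal among members containing \(x\); the relation \(\F_a\subsetneq\F_x\) leaves room for such a set.
\item \emph{Re-run the pair argument.} As a last resort, pass to a second critical \(y\) and repeat the pair-parity argument (\cref{lem:pair-parity}) inside \(\G_{xy}\). Its height is now at most two, since it sits below \(C_x\subsetneq C_a\subsetneq U\). Then \cref{lem:height2-extremal} and the exact cube count from the height-four proof should again force \(t=2n-2\).
\end{enumerate}
Route (c) is the most promising fallback, because Step 1 gives \(\G_{xy}\) exactly the height-two structure that made the height-four argument work.
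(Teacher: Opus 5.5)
Your proposal does not yet prove the statement. Steps~1--2 only reproduce \(t\ge2n-1\), which is not a contradiction, and Step~3 lists three routes without carrying any of them out.

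Step~2 also contains an unjustified claim. \cref{lem:Ca-properties}(iii) gives \(b\in C_c\) only when \(b\) is critical, and the element \(b\) supplied by \cref{thm:main-height4} need not be critical. This is why \cref{prop:lower-bound} goes through the staircase of \cref{lem:hu-staircase}.

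Finally, the endpoint you name for route~(c) is not directly available. The cube count in the height-four proof needs the extremal height-two structure for all three pairs among \(x,y,z\). Your hypothesis on \(x\) controls only \(\G_{xy}\) and \(\G_{xz}\); it says nothing about \(\G_{yz}\).

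The missing observation is that route~(c) closes immediately, with no cube count and no ``extra unit''. Keep your \(a\ne x\) with \(a\notin C_x\), so that \(C_x\subsetneq C_a\subsetneq U\). Take \(y\in K\setminus\set{x}\).
\begin{enumerate}[label=\textup{(\arabic*)},leftmargin=2.5em]
\item Every member of \(\G_{xy}\) is a proper subset of \(C_x\), since \(y\in C_x\). Adding \(C_x,C_a,U\) on top shows \(H(\G_{xy})\le2\).
\item \(T_{xy}\subseteq C_x\setminus\set{y}\subseteq U\setminus\set{x,y,a}\). Here \(a\ne y\) because \(y\in C_x\).
\item \cref{lem:pair-parity,lem:height2-extremal} then give \(n-1\le\abs{\G_{xy}}\le\abs{T_{xy}}+1\le n-2\), a contradiction.
\end{enumerate}
With this fix your argument is correct and slightly shorter than the paper's.

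The paper does not argue by contradiction. It uses only \(H(\G_{xy})\le3\) and handles the height-two case with the same extremal lemma. In the height-three case it uses a chain \(A\subsetneq B\subsetneq T_{xy}\) to force \(C_x\cup X=U\) for every \(X\in\F_x\). Any \(q\ne x\) outside \(C_x\) would then have the same occurrence vector as \(x\), contradicting separation. Your \(C_a\) plays the role of the paper's intermediate member \(C_x\cup X\) between \(C_x\) and \(U\).
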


\begin{proof}
Fix \(y\in K\setminus\set{x}\).  As before,
\[
\abs{\G_{xy}}=d_{xy}\ge n-1,
\]
and every chain in \(\G_{xy}\) extends by \(C_x,U\), so
\begin{equation}\label{eq:Gxy-height3}
H(\G_{xy})\le3.
\end{equation}
If \(H(\G_{xy})\le2\), then \cref{lem:height2-extremal} gives
\[
n-1\le \abs{\G_{xy}}
\le \abs{T_{xy}}+1
\le n-1.
\]
Thus \(T_{xy}=U\setminus\set{x,y}\).  Since \(T_{xy}\subseteq C_x\), \(y\in C_x\), and \(x\notin C_x\), we obtain \(C_x=U\setminus\set{x}\).

It remains to consider \(H(\G_{xy})=3\).  Because \(T_{xy}\) is the maximum member of \(\G_{xy}\), there is a chain
\begin{equation}\label{eq:ABT}
A\subsetneq B\subsetneq T_{xy}.
\end{equation}
Indeed, the top of any three-chain must be \(T_{xy}\), or adjoining \(T_{xy}\) would create a four-chain in \(\G_{xy}\).

Let \(X\in\F\) contain \(x\).  If \(C_x\cup X\subsetneq U\), then
\[
A\subsetneq B\subsetneq T_{xy}
\subsetneq C_x
\subsetneq C_x\cup X
\subsetneq U
\]
is a six-chain: the containment \(T_{xy}\subsetneq C_x\) is strict because \(y\in C_x\setminus T_{xy}\), and \(C_x\subsetneq C_x\cup X\) is strict because \(x\in X\setminus C_x\).  Hence
\begin{equation}\label{eq:Cx-union-X}
C_x\cup X=U
\qquad\text{for every }X\in\F_x.
\end{equation}
If some \(q\ne x\) were absent from \(C_x\), then every member avoiding \(x\) would avoid \(q\), while \cref{eq:Cx-union-X} would force every member containing \(x\) to contain \(q\).  Thus \(x\) and \(q\) would have identical occurrence vectors, contradicting separation.  Therefore \(U\setminus C_x=\set{x}\).
\end{proof}

\begin{theorem}[Critical-pair dichotomy]\label{thm:pair-dichotomy}
For any distinct \(x,y\in K\), one of the following holds:
\begin{equation}\label{eq:dichotomy}
\boxed{
T_{xy}=U\setminus\set{x,y}
\quad\text{or}\quad
 d_{xy}\ge\left\lceil\frac{t+1}{2}\right\rceil.}
\end{equation}
In the second case, \(d_{xy}\ge n\).
\end{theorem}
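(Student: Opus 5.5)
The plan is to assume the first alternative fails and derive the size bound. So suppose \(T_{xy}\ne U\setminus\set{x,y}\), and fix a coordinate \(q\in U\setminus\set{x,y}\) with \(q\notin T_{xy}\). The last sentence of the statement is routine: \cref{prop:lower-bound} gives \(t\ge2n-1\), hence \(\lceil(t+1)/2\rceil\ge\lceil 2n/2\rceil=n\). The work is in the main inequality \(d_{xy}\ge\lceil(t+1)/2\rceil\).

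First I record the structure around \(q\). By \cref{thm:critical-coatom}, \(C_x=U\setminus\set{x}\) and \(C_y=U\setminus\set{y}\). The condition \(q\notin T_{xy}\) says that every member avoiding both \(x\) and \(y\) also avoids \(q\). Equivalently, inside the \(x\)-avoiding fiber \(\F_{\overline{x}}=\set{A\in\F:x\notin A}\), every member containing \(q\) contains \(y\), so \(y\) dominates \(q\) there. The fiber \(\F_{\overline{x}}\) has the following properties:
\begin{itemize}
\item it has exactly \(t\) members, since \(x\) is critical;
\item it is union-closed and has top element \(U\setminus\set{x}\);
\item it has height at most \(4\), because every chain in it extends by \(U\).
\end{itemize}
In this fiber \(y\) is avoided by exactly \(d_{xy}\) members and occurs in exactly \(r_{xy}=t-d_{xy}\) members. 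The target is therefore equivalent to \(r_{xy}<t/2\), that is, \(y\) must occur in strictly fewer than half of the members of \(\F_{\overline{x}}\).

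The second step is a counting argument modelled on the proof of \cref{prop:lower-bound}. I would assume \(r_{xy}\ge t/2\) and build an explicit smaller family. The idea is to transfer the \(xy\)-disjoint part \(\G_{xy}\) against the \(10\) and \(01\) cells, using the absence of \(q\) from \(T_{xy}\). Concretely, I would compare the map \(A\mapsto A\cup\set{q}\) on \(\G_{xy}\) with the \(q\)-containing members of the \(y\)-fiber. I would then apply \cref{lem:hu-staircase} to \(\mathcal B=\G_{xy}\) (or to \(\F_{\overline{x}}\)), with the staircase taken along the critical coordinate \(x\). The staircase supplies an element \(a\) occurring in \(n-1\) members of \(\mathcal S\), and the disjointness of \(\mathcal S\) from \(\mathcal B\) gives an inequality of the form \(\abs{\mathcal B\text{-frequency of } a}+(n-1)\le t\). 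To close the argument I would combine this with \cref{thm:main-height4} applied to \(\F_{\overline{x}}\), which has height at most four. That theorem gives an element of frequency greater than \(t/2\) there. Following the domination chain \(q\to y\), I would show that \(y\), or an element dominating it, inherits a frequency that contradicts the criticality counts \(N_{10}=N_{01}\) of \cref{lem:pair-parity} unless \(d_{xy}>t/2\).

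The main obstacle is exactly this last transfer. \Cref{thm:main-height4} and minimality only guarantee \emph{some} element of frequency greater than \(t/2\) in the avoidance fiber, whereas the statement needs the bound for the specific quantity \(d_{xy}\), the number of members avoiding \(y\). My first attempt is a height-based argument. I would show that a missing \(q\) lets one lengthen chains through \(T_{xy}\subsetneq T_{xy}\cup Q\subsetneq U\setminus\set{x}\subsetneq U\) for suitable \(q\)-containing \(Q\). This should force the \(y\)-containing part of \(\F_{\overline{x}}\) to have height at most two above \(T_{xy}\). \Cref{lem:height2-extremal} would then bound \(r_{xy}\) by roughly \(\abs{U\setminus T_{xy}}\). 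That bound, combined with \(d_{xy}\ge n-1\) and \(t\ge 2n-1\), should yield \(r_{xy}\le\lfloor(t-1)/2\rfloor\).
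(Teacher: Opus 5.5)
Your reduction of the target to $r_{xy}<t/2$, your use of the coatoms $C_x=U\setminus\set{x}$ and $C_y=U\setminus\set{y}$, and your derivation of $d_{xy}\ge n$ from $t\ge 2n-1$ are all fine. The gap is that the main inequality is never proved, and the routes you sketch do not reach it.

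\begin{itemize}
\item Applying \cref{thm:main-height4} or \cref{lem:hu-staircase} to $\F_{\overline{x}}$ only yields \emph{some} element of frequency $>t/2$ there. Nothing connects that element to $y$.
\item The domination you observe ($q\in A\Rightarrow y\in A$ for $A\in\F_{\overline{x}}$) gives $f_{\F_{\overline{x}}}(q)\le r_{xy}$. That is the wrong direction for your purpose.
\item In the height-based attempt, the chain $T_{xy}\subsetneq T_{xy}\cup Q\subsetneq U\setminus\set{x}\subsetneq U$ has only four members, so by itself it contradicts nothing.
\item The conclusion you aim for is unsupported. You want the $y$-containing part of $\F_{\overline{x}}$ to have height two, so that \cref{lem:height2-extremal} bounds $r_{xy}$. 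But the height bound permits chains $Y_1\subsetneq Y_2\subsetneq Y_3\subsetneq U\setminus\set{x}\subsetneq U$ of such members.
\item Even granting height two, that lemma on the ground set $U\setminus\set{x}$ gives only $r_{xy}\le n$. This is one short of $r_{xy}\le(t-1)/2$ when $t=2n-1$.
\end{itemize}

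The missing idea is to apply \cref{lem:height2-extremal} to $\G_{xy}$ instead. Since $d_{xy}\ge n-1$ and $\abs{T_{xy}}\le n-2$, height at most two for $\G_{xy}$ would force $T_{xy}=U\setminus\set{x,y}$. So non-fullness gives a chain $A\subsetneq B\subsetneq T_{xy}$ inside $\G_{xy}$; these are exactly the two members your chain lacks below $T_{xy}$.

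Now take any $y$-only member $Y$. The set $T_{xy}\cup Y$ strictly contains $T_{xy}$, since it contains $y$, and it lies in $C_x=U\setminus\set{x}$. If it were a proper subset of $C_x$, then $A\subsetneq B\subsetneq T_{xy}\subsetneq T_{xy}\cup Y\subsetneq C_x\subsetneq U$ would be a six-chain. Hence $T_{xy}\cup Y=U\setminus\set{x}$. Symmetrically, $T_{xy}\cup X=U\setminus\set{y}$ for every $x$-only member $X$.

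Consequently your coordinate $q$ lies in all $r_{xy}$ $x$-only members, in all $r_{xy}$ $y$-only members, and in $U$. This gives $t\ge f(q)\ge 2(t-d_{xy})+1$, which is exactly $d_{xy}\ge(t+1)/2$. The bound comes from the frequency cap on $q$, not from bounding $r_{xy}$ by the size of a ground set.
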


\begin{proof}
Suppose \(T_{xy}\ne U\setminus\set{x,y}\).  The first part of the proof of \cref{thm:critical-coatom} shows that \(H(\G_{xy})\) cannot be at most two.  Hence there is a chain \(A\subsetneq B\subsetneq T_{xy}\) as in \cref{eq:ABT}.

Let \(X\) be any member containing \(x\) but not \(y\).  By \cref{thm:critical-coatom}, \(C_y=U\setminus\set{y}\).  The set \(T_{xy}\cup X\) is a member of \(\F\), avoids \(y\), and strictly contains \(T_{xy}\).  If it were a proper subset of \(C_y\), then
\[
A\subsetneq B\subsetneq T_{xy}
\subsetneq T_{xy}\cup X
\subsetneq C_y
\subsetneq U
\]
would be a six-chain.  Thus
\begin{equation}\label{eq:x-only-union}
T_{xy}\cup X=U\setminus\set{y}
\end{equation}
for every \(x\)-only member \(X\).  Symmetrically, every \(y\)-only member \(Y\) satisfies
\begin{equation}\label{eq:y-only-union}
T_{xy}\cup Y=U\setminus\set{x}.
\end{equation}

Choose
\(
q\in U\setminus(T_{xy}\cup\set{x,y})
\), which exists by assumption.  Equations \cref{eq:x-only-union,eq:y-only-union} show that \(q\) belongs to all \(r_{xy}\) \(x\)-only members and all \(r_{xy}\) \(y\)-only members; it also belongs to \(U\).  Therefore
\[
f(q)\ge 2r_{xy}+1=2(t-d_{xy})+1.
\]
Since \(f(q)\le t\), we obtain \(d_{xy}\ge(t+1)/2\), proving the second alternative.  Finally \(t\ge2n-1\) implies \(\lceil(t+1)/2\rceil\ge n\).
\end{proof}

\section{The trace normal form for a nonfull critical pair}

Fix a critical pair \(x,y\) in the second alternative of \cref{thm:pair-dichotomy}, and abbreviate
\begin{equation}\label{eq:T-P}
T:=T_{xy},
\qquad
P:=U\setminus T,
\qquad
k:=\abs{P}.
\end{equation}
Then \(x,y\in P\), and nonfullness gives \(k\ge3\).  Fix a chain
\begin{equation}\label{eq:fixed-ABT}
A\subsetneq B\subsetneq T
\end{equation}
in \(\G_{xy}\).

\begin{lemma}[Coatoms on the missing-coordinate set]\label{lem:P-coatoms}
For every \(p\in P\),
\begin{equation}\label{eq:P-coatom}
C_p=U\setminus\set{p}.
\end{equation}
\end{lemma}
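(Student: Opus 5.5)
The cases \(p=x\) and \(p=y\) are already covered by \cref{thm:critical-coatom}, since \(x,y\in K\). So the plan is to fix \(q\in P\setminus\set{x,y}\) and rerun the six-chain argument from the proof of \cref{thm:critical-coatom}. The difference is that \(C_x\) is replaced by \(C_q\), and the three-chain \(A\subsetneq B\subsetneq T\) from \cref{eq:fixed-ABT} is used as the bottom of the chain.

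\textbf{Step 1: \(T\) sits strictly inside \(C_q\).} Every member of \(\G_{xy}\) avoids \(q\), because \(q\notin T\). Hence \(T\subseteq C_q\). The containment is strict: by \cref{lem:Ca-properties}(iii), the critical element \(x\) lies in \(C_q\) (as \(q\ne x\)), while \(x\notin T\). This gives the chain \(A\subsetneq B\subsetneq T\subsetneq C_q\).

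\textbf{Step 2: \(C_q\) joins every \(q\)-member to the top.} Take any \(Z\in\F\) with \(q\in Z\). Since \(q\notin C_q\), we have \(C_q\subsetneq C_q\cup Z\). If also \(C_q\cup Z\subsetneq U\), then
\[
A\subsetneq B\subsetneq T\subsetneq C_q\subsetneq C_q\cup Z\subsetneq U
\]
is a six-chain in \(\F\), which contradicts \(H(\F)\le5\). Hence \(C_q\cup Z=U\) for every \(Z\in\F_q\).

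\textbf{Step 3: separation.} Suppose some \(s\ne q\) were missing from \(C_q\). Every member avoiding \(q\) lies in \(C_q\), so it avoids \(s\). By Step 2, every member containing \(q\) must supply \(s\), so it contains \(s\). Then \(q\) and \(s\) have identical occurrence vectors, which contradicts \cref{lem:separating}. Therefore \(U\setminus C_q=\set{q}\), which is \cref{eq:P-coatom}.

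I do not expect a serious obstacle. The one point needing care is the strictness \(T\subsetneq C_q\) in Step 1. It does not come from \(q\) itself but from a critical coordinate (\(x\) or \(y\)) that \(T\) misses and \(C_q\) contains, and that is exactly what \cref{lem:Ca-properties}(iii) supplies.
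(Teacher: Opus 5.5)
Your proposal is correct and matches the paper's proof step for step. The case $p\in\{x,y\}$ comes from \cref{thm:critical-coatom}; strictness of $T\subsetneq C_p$ comes from a critical coordinate lying in $C_p\setminus T$ via \cref{lem:Ca-properties}; the six-chain $A\subsetneq B\subsetneq T\subsetneq C_p\subsetneq C_p\cup X\subsetneq U$ forces $C_p\cup X=U$; and separation finishes the argument. You also state explicitly the inclusion $T\subseteq C_p$, which follows from $p\notin T$ and which the paper leaves implicit.
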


\begin{proof}
For \(p=x\) or \(p=y\), this is \cref{thm:critical-coatom}.  Let \(p\in P\setminus\set{x,y}\).  By \cref{lem:Ca-properties}, both critical elements \(x,y\) belong to \(C_p\), whereas neither belongs to \(T\).  Thus \(T\subsetneq C_p\).

For any \(X\in\F_p\), if \(C_p\cup X\subsetneq U\), then
\[
A\subsetneq B\subsetneq T
\subsetneq C_p
\subsetneq C_p\cup X
\subsetneq U
\]
would be a six-chain.  Hence \(C_p\cup X=U\) for every member containing \(p\).  If a second coordinate \(q\ne p\) were absent from \(C_p\), then every member avoiding \(p\) would avoid \(q\), and every member containing \(p\) would contain \(q\).  This violates separation.  Therefore \(U\setminus C_p=\set{p}\).
\end{proof}

\begin{theorem}[Three-trace normal form]\label{thm:trace-normal}
For every \(F\in\F\), the trace of \(F\) on \(P\) is one of
\begin{equation}\label{eq:three-traces}
\boxed{
F\cap P\in
\set{\varnothing}
\cup
\set{P\setminus\set{p}:p\in P}
\cup
\set{P}.}
\end{equation}
\end{theorem}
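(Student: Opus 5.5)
The plan is to show that any member \(F\) whose trace on \(P\) is nonempty can miss at most one coordinate of \(P\). I will do this by building a chain of length six through the fixed chain \cref{eq:fixed-ABT} whenever \(F\) misses two coordinates, which contradicts \(H(\F)\le5\). The coatoms supplied by \cref{lem:P-coatoms} provide the top of the chain.

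Fix \(F\in\F\) with \(F\cap P\ne\varnothing\), and suppose, for contradiction, that there are two distinct coordinates \(p,q\in P\setminus F\). Consider the member \(T\cup F\in\F\), which lies in \(\F\) by union-closure since \(T=T_{xy}\in\G_{xy}\subseteq\F\). I will check four strict containments.

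\begin{enumerate}[label=\textup{(\alph*)},leftmargin=2.3em]
\item \(T\subsetneq T\cup F\). This holds because \(F\) contains a point of \(P=U\setminus T\).
\item \(T\cup F\subseteq C_p\). Since \(T\cap P=\varnothing\) and \(p\notin F\), the set \(T\cup F\) avoids \(p\). By \cref{lem:P-coatoms}, \(C_p=U\setminus\set{p}\), so the containment holds.
\item \(T\cup F\subsetneq C_p\). The set \(T\cup F\) also avoids \(q\), whereas \(q\in C_p\). Hence the containment in \textup{(b)} is strict.
\item \(C_p\subsetneq U\). This holds because \(p\notin C_p\).
\end{enumerate}

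Together with \cref{eq:fixed-ABT}, these give
\[
A\subsetneq B\subsetneq T\subsetneq T\cup F\subsetneq C_p\subsetneq U,
\]
which is a six-chain in \(\F\). This contradicts \(H(\F)\le5\). Therefore every member with nonempty trace on \(P\) satisfies \(\abs{P\setminus F}\le1\), so its trace is either \(P\) or \(P\setminus\set{p}\) for some \(p\in P\). Members with empty trace give the first option in \cref{eq:three-traces}, which proves the statement.

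I do not expect a real obstacle. The only points needing care are the strictness of the middle containments. The containment \(T\subsetneq T\cup F\) is exactly where the hypothesis \(F\cap P\ne\varnothing\) is used. The containment \(T\cup F\subsetneq C_p\) is where the second missing coordinate \(q\) is used. The argument also relies on \cref{lem:P-coatoms} for the second missing coordinate \(p\), not only for the critical coordinates \(x,y\); this is why the lemma is needed for all of \(P\).
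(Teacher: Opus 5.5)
Your proposal is correct and follows essentially the same route as the paper: both build the six-chain \(A\subsetneq B\subsetneq T\subsetneq T\cup F\subsetneq C_p\subsetneq U\) using \cref{lem:P-coatoms}. The only difference is presentation. The paper argues directly that \(T\cup F=C_p\) for any \(p\in P\setminus F\) and then takes traces on \(P\). You instead argue by contradiction from a second missing coordinate \(q\), which is what makes the containment \(T\cup F\subsetneq C_p\) strict.
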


\begin{proof}
Let \(R=F\cap P\).  There is nothing to prove if \(R=\varnothing\) or \(R=P\).  Suppose \(\varnothing\ne R\subsetneq P\), and choose \(p\in P\setminus R\).  The member \(T\cup F\) strictly contains \(T\), avoids \(p\), and therefore is contained in \(C_p=U\setminus\set{p}\).  If the containment were strict, then
\[
A\subsetneq B\subsetneq T
\subsetneq T\cup F
\subsetneq C_p
\subsetneq U
\]
would be a six-chain.  Hence \(T\cup F=C_p\).  Taking traces on \(P\) gives \(R=P\setminus\set{p}\).
\end{proof}

For a family \(\mathcal X\subseteq2^T\) and a fixed trace \(R\subseteq P\), write
\[
\mathcal X\star R:=\set{S\cup R:S\in\mathcal X}.
\]
Define families of subsets of \(T\) by
\begin{align*}
\G&:=\set{S\subseteq T:S\in\F},\\
\Hh_p&:=\set{S\subseteq T:S\cup(P\setminus\set{p})\in\F}
\qquad(p\in P),\\
\W&:=\set{S\subseteq T:S\cup P\in\F}.
\end{align*}
Then \cref{thm:trace-normal} gives the disjoint decomposition
\begin{equation}\label{eq:layer-decomp}
\F
=
(\G\star\varnothing)
\mathbin{\dot\cup}
\bigsqcup_{p\in P}(\Hh_p\star(P\setminus\set{p}))
\mathbin{\dot\cup}
(\W\star P).
\end{equation}
For families on \(T\), let
\(
\mathcal X\join\mathcal Y=
\set{X\cup Y:X\in\mathcal X,\ Y\in\mathcal Y}
\).

\begin{proposition}[Layer closure and counting identities]\label{prop:layer-count}
The decomposition \cref{eq:layer-decomp} satisfies
\begin{align}
\G\join\G&\subseteq\G,
&\G\join\Hh_p&\subseteq\Hh_p,
&\Hh_p\join\Hh_p&\subseteq\Hh_p,
\label{eq:layer-closure1}\\
\Hh_p\join\Hh_q&\subseteq\W &&(p\ne q),
&\G\join\W&\subseteq\W,
&\Hh_p\join\W&\subseteq\W,
\label{eq:layer-closure2}\\
\W\join\W&\subseteq\W.&&&&
\label{eq:layer-closure3}
\end{align}
These are exactly the tracewise conditions for union-closure.  Moreover,
\begin{equation}\label{eq:T-in-layers}
T\in\G\cap\bigcap_{p\in P}\Hh_p\cap\W.
\end{equation}
Let
\[
g=\abs{\G},
\qquad
h_p=\abs{\Hh_p},
\qquad
w=\abs{\W}.
\]
Then
\begin{equation}\label{eq:total-layer-count}
2t=g+\sum_{p\in P}h_p+w,
\end{equation}
and for every \(p\in P\),
\begin{equation}\label{eq:hp-deficit}
h_p=t-g+\delta_p.
\end{equation}
Consequently,
\begin{equation}\label{eq:w-exact}
\boxed{
w=(k-1)g-(k-2)t-\sum_{p\in P}\delta_p.}
\end{equation}
In particular,
\begin{equation}\label{eq:g-lower-exact}
\boxed{
g\ge
\left\lceil
\frac{(k-2)t+1+\sum_{p\in P}\delta_p}{k-1}
\right\rceil
\ge
\left\lceil
\frac{(k-2)t+1}{k-1}
\right\rceil.}
\end{equation}
Here \(g=d_{xy}\), and \(\delta_p=0\) precisely when \(p\) is critical.
\end{proposition}

\begin{proof}
The trace unions
\[
\varnothing\cup\varnothing=\varnothing,
\quad
\varnothing\cup(P\setminus\set{p})=P\setminus\set{p},
\quad
(P\setminus\set{p})\cup(P\setminus\set{q})=P\quad(p\ne q),
\]
together with the analogous idempotent and full-trace unions, give \cref{eq:layer-closure1,eq:layer-closure2,eq:layer-closure3}.  Conversely, these inclusions check every possible pair of the traces in \cref{eq:three-traces}.

The set \(T\) is the maximum of \(\G_{xy}\), so \(T\in\G\).  By \cref{lem:P-coatoms},
\(
C_p=T\cup(P\setminus\set{p})
\), hence \(T\in\Hh_p\); and \(U=T\cup P\) gives \(T\in\W\).  This proves \cref{eq:T-in-layers} and also \(w\ge1\).

Equation \cref{eq:total-layer-count} is immediate from the disjoint decomposition.  A coordinate \(p\in P\) is absent exactly in the \(\G\)-layer and the \(\Hh_p\)-layer.  Therefore
\[
f(p)=2t-g-h_p.
\]
Using \(\delta_p=t-f(p)\) gives \cref{eq:hp-deficit}.  Substitution into \cref{eq:total-layer-count} yields \cref{eq:w-exact}.  Finally \(w\ge1\) gives \cref{eq:g-lower-exact}.  The identity \(g=d_{xy}\) follows from the definition of \(\G\).
\end{proof}

\section{Coordinate projection and the exact height-five obstruction}

Fix a critical element \(p\in K\).  Define the coordinate-deletion projection
\begin{equation}\label{eq:projection}
\pi_p(\F):=\set{A\setminus\set{p}:A\in\F}.
\end{equation}

\begin{lemma}[Projection preserves the admissible class]\label{lem:projection-class}
A counterexample \(\F\) contains no singleton \(\set{p}\).  For critical \(p\), the family \(\pi_p(\F)\) is union-closed, contains no empty set, and satisfies
\begin{equation}\label{eq:projection-height}
H(\pi_p(\F))\le H(\F).
\end{equation}
\end{lemma}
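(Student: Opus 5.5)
The statement has three parts. I would treat them separately, since each uses a different earlier fact.

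\emph{No singleton $\{p\}$.} Suppose $\{p\}\in\F$. Then every member containing $p$ is a union $\{p\}\cup A$ with $A\in\F$, and in fact $A\mapsto A\cup\{p\}$ maps $\F$ into $\F_p$. Since $\F$ has no strict-majority element, we have $f(p)\le t=\abs{\F}/2$. On the other hand, $\F\setminus\F_p$ injects into $\F_p$ via $A\mapsto A\cup\{p\}$, because distinct sets avoiding $p$ stay distinct after adding $p$. This gives $2t-f(p)\le f(p)$, so $f(p)\ge t$. Moreover $\{p\}$ itself is in $\F_p$ and is not the image of any member avoiding $p$, since such a member would be empty and $\varnothing\notin\F$. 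Hence the injection is not onto, so $f(p)\ge (2t-f(p))+1$, which gives $f(p)>t$. This contradicts $f(p)\le t$. The claim needs only that $\F$ is a counterexample, not that $p$ is critical.

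\emph{$\pi_p(\F)$ is union-closed and empty-set-free.} Union-closure is immediate from $(A\setminus\{p\})\cup(B\setminus\{p\})=(A\cup B)\setminus\{p\}$. The projection contains $\varnothing$ only if some $A\in\F$ has $A\setminus\{p\}=\varnothing$. Since $A$ is nonempty, that forces $A=\{p\}$, which the first part excludes.

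\emph{Height bound.} Take a strict chain $D_1\subsetneq\cdots\subsetneq D_s$ in $\pi_p(\F)$; the task is to lift it to a strict chain of the same length in $\F$. Each $D_i$ equals either $D_i$ or $D_i\cup\{p\}$ with that set in $\F$. The plan is to choose the lift $E_i\in\{D_i,\,D_i\cup\{p\}\}\cap\F$, preferring $D_i\cup\{p\}$ when it lies in $\F$. I would then repair monotonicity using union-closure: set $E_i':=E_1\cup\cdots\cup E_i$. These sets belong to $\F$, form a weakly increasing chain, and satisfy $E_i'\setminus\{p\}=D_i$, because the $D_j$ are increasing. Two consecutive sets $E_{i}'$ and $E_{i+1}'$ then have distinct projections $D_i\neq D_{i+1}$, so they are distinct, and the chain is strict. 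Therefore $s\le H(\F)$.

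The main obstacle is the singleton exclusion, since the other two parts are formal. What needs care there is that the strict inequality comes from $\varnothing\notin\F$. Notably, criticality of $p$ is never used: it is only the setting in which the projection is applied later.
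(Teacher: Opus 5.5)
Your proposal is correct and follows the paper's proof step for step. It uses the injection $A\mapsto A\cup\{p\}$ from the $p$-avoiding members into $\F_p$, which misses $\{p\}$ because $\varnothing\notin\F$; the commutation of coordinate deletion with union; and a lift of a projected chain through cumulative unions of preimages. Your preference for the lift $D_i\cup\{p\}$ is harmless but unnecessary, since the cumulative-union step works for any choice of preimages, and you are right that criticality of $p$ is never used.
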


\begin{proof}
If \(\set{p}\in\F\), then the map
\[
A\longmapsto A\cup\set{p}
\]
injects the members avoiding \(p\) into those containing \(p\).  Its image does not contain \(\set{p}\), because \(\varnothing\notin\F\).  Thus \(p\) occurs strictly more often than it is absent, contradicting that \(\F\) is a counterexample.

Coordinate deletion commutes with union, so \(\pi_p(\F)\) is union-closed.  Its empty set could arise only from \(\varnothing\) or \(\set{p}\), neither of which belongs to \(\F\).

Let
\(
B_1\subsetneq\cdots\subsetneq B_s
\)
be a chain in \(\pi_p(\F)\), and choose preimages \(A_i\in\F\) with \(A_i\setminus\set{p}=B_i\).  Put
\[
D_i=A_1\cup\cdots\cup A_i.
\]
Then \(D_i\in\F\) and
\[
D_i\setminus\set{p}=B_1\cup\cdots\cup B_i=B_i.
\]
The projected sets are strictly increasing, so
\(
D_1\subsetneq\cdots\subsetneq D_s
\), proving \cref{eq:projection-height}.
\end{proof}

Define the family of lower endpoints of \(p\)-matching edges by
\begin{equation}\label{eq:Jp}
\J_p
:=
\set{A\in\F:p\notin A,\ A\cup\set{p}\in\F}.
\end{equation}
Let
\begin{equation}\label{eq:cp-jp}
c_p:=\abs{\J_p},
\qquad
j_p(z):=\abs{\set{A\in\J_p:z\in A}}
\quad(z\ne p).
\end{equation}
By \cref{thm:critical-coatom},
\(
C_p=U\setminus\set{p}\in\J_p
\), since \(C_p\cup\set{p}=U\).  In particular,
\begin{equation}\label{eq:cp-positive}
c_p\ge1.
\end{equation}

\begin{proposition}[Exact projection counts]\label{prop:projection-counts}
For every critical \(p\),
\begin{equation}\label{eq:projection-size}
\abs{\pi_p(\F)}=2t-c_p,
\end{equation}
and for every \(z\ne p\),
\begin{equation}\label{eq:projection-frequency}
f_{\pi_p(\F)}(z)=f(z)-j_p(z)=t-\delta_z-j_p(z).
\end{equation}
Since \(\pi_p(\F)\) has fewer members than \(\F\), there exists \(z\ne p\) such that
\begin{equation}\label{eq:projection-witness}
\boxed{c_p-2j_p(z)\ge2\delta_z+1.}
\end{equation}
\end{proposition}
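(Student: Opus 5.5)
The plan is to analyze the fibers of the deletion map $A\mapsto A\setminus\{p\}$ on $\F$. Each fiber has one or two members. A fiber has two members exactly when it is $\{A,\,A\cup\{p\}\}$ with $A\in\J_p$. Distinct members of $\J_p$ give distinct fibers, since their images $A$ are distinct. Hence
\[
\abs{\pi_p(\F)}=\abs{\F}-c_p=2t-c_p,
\]
which is \cref{eq:projection-size}.

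For \cref{eq:projection-frequency}, fix $z\ne p$ and count the members of $\pi_p(\F)$ containing $z$. Each such member is the common image of the members of its fiber, and $z$ lies in every member of that fiber or in none. Summing over fibers, $f(z)$ counts each two-element fiber containing $z$ twice and each one-element fiber once. The two-element fibers containing $z$ are exactly the pairs $\{A,A\cup\{p\}\}$ with $A\in\J_p$ and $z\in A$, and there are $j_p(z)$ of them. Therefore
\[
f_{\pi_p(\F)}(z)=f(z)-j_p(z),
\]
and $f(z)=t-\delta_z$ by \cref{eq:deficit}.

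For the witness inequality I would use the minimality of $\F$. By \cref{lem:projection-class}, $\pi_p(\F)$ is a nonempty union-closed family of nonempty sets with height at most $H(\F)\le5$. By \cref{eq:cp-positive} it has $2t-c_p<2t$ members. A $5$-minimal counterexample has the minimum number of members among all counterexamples of height at most five, so $\pi_p(\F)$ is not a counterexample. Its ground set is $U\setminus\{p\}$, because $\{p\}\notin\F$ means every member meeting $p$ also meets another element. So some $z\ne p$ satisfies
\[
2\bigl(t-\delta_z-j_p(z)\bigr)>2t-c_p.
\]
This rearranges to $c_p-2j_p(z)>2\delta_z$. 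Both sides are integers, so $c_p-2j_p(z)\ge2\delta_z+1$, which is \cref{eq:projection-witness}.

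The only delicate point is the fiber bookkeeping: the two members of a fiber agree on every $z\ne p$, so the double count is exactly $j_p(z)$. Everything else follows directly from \cref{lem:projection-class} and the minimality hypothesis.
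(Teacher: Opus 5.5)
Your proposal is correct and follows the paper's proof essentially step for step. You count fibers of \(A\mapsto A\setminus\set{p}\) to get the size and frequency formulas, then apply minimality to the strictly smaller projected family, using \cref{eq:cp-positive} and \cref{lem:projection-class}, and finish with integrality.

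One small correction: the ground set of \(\pi_p(\F)\) is \(U\setminus\set{p}\) simply because deleting \(p\) commutes with unions. The fact \(\set{p}\notin\F\) is what keeps \(\varnothing\) out of the projection, not what determines its ground set.
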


\begin{proof}
Every fiber of \(\pi_p\) has size one or two.  A two-element fiber is exactly a pair
\(
A,A\cup\set{p}
\)
with \(A\in\J_p\), so each such pair lowers the member count by one.  This proves \cref{eq:projection-size}.

For \(z\ne p\), collapsing a matched pair lowers the frequency of \(z\) by one precisely when the lower endpoint contains \(z\).  This gives \cref{eq:projection-frequency}.

By \cref{eq:cp-positive}, \(\pi_p(\F)\) has fewer than \(2t\) members.  It is therefore not a counterexample, by the minimality of \(\F\) and \cref{lem:projection-class}.  Hence some \(z\ne p\) satisfies
\[
t-\delta_z-j_p(z)
>
\frac{2t-c_p}{2}.
\]
Rearranging gives
\(
c_p-2j_p(z)>2\delta_z
\); integrality yields \cref{eq:projection-witness}.
\end{proof}

\begin{corollary}[Small matching fibers]\label{cor:cp-atleast3}
For every critical element \(p\),
\begin{equation}\label{eq:cp-atleast3}
\boxed{c_p\ge3.}
\end{equation}
If \(c_p=3\), then every witness \(z\) in \cref{eq:projection-witness} is critical and satisfies \(j_p(z)=1\).
\end{corollary}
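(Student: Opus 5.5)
We prove $c_p\ge3$ by ruling out $c_p=1$ and $c_p=2$ using the witness inequality \cref{eq:projection-witness}, and then read off the equality case $c_p=3$ from the same inequality. The key extra input is that the coatom $C_p=U\setminus\set{p}$ always lies in $\J_p$ (by \cref{thm:critical-coatom}). It therefore contributes to $j_p(z)$ for every $z\ne p$, which gives
\[
j_p(z)\ge1\qquad\text{for all }z\ne p.
\]

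Fix a witness $z\ne p$ from \cref{prop:projection-counts}, so that $c_p-2j_p(z)\ge2\delta_z+1\ge1$.
\begin{itemize}
\item Since $j_p(z)\ge1$, this gives $c_p\ge 2j_p(z)+1\ge3$, which is \cref{eq:cp-atleast3}.
\item If $c_p=3$, then $3-2j_p(z)\ge 2\delta_z+1$ forces $j_p(z)\le1-\delta_z$.
\item Combined with $j_p(z)\ge1$ and $\delta_z\ge0$, this gives $\delta_z=0$ and $j_p(z)=1$.
\end{itemize}
So $z$ is critical and $j_p(z)=1$.

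The only point needing care is justifying $j_p(z)\ge1$. This requires $z\in C_p$, which holds because $C_p=U\setminus\set{p}$ and $z\in U\setminus\set{p}$. It also requires $C_p\in\J_p$, already noted in \cref{eq:cp-positive}, since $C_p\cup\set{p}=U\in\F$. Because this is the step that uses the height-five structure, namely the critical-coatom theorem, I expect it to be the main (though mild) obstacle. Everything else is integer arithmetic on \cref{eq:projection-witness}.
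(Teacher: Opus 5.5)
Your proposal is correct and follows the paper's proof: both use $C_p=U\setminus\set{p}\in\J_p$ to get $j_p(z)\ge1$ for every $z\ne p$, and then do integer arithmetic on \cref{eq:projection-witness} to obtain $c_p\ge3$ and, when $c_p=3$, $\delta_z=0$ and $j_p(z)=1$. The only difference is cosmetic: you argue directly from a fixed witness, whereas the paper assumes $c_p\le2$ and derives a contradiction.
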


\begin{proof}
The matched lower endpoint \(C_p=U\setminus\set{p}\) contains every \(z\ne p\), so \(j_p(z)\ge1\) for all \(z\ne p\).  If \(c_p\le2\), then
\(
c_p-2j_p(z)\le0
\)
for every \(z\ne p\), contradicting \cref{eq:projection-witness}.  Hence \(c_p\ge3\).

If \(c_p=3\), a witness satisfies
\[
3-2j_p(z)\ge2\delta_z+1.
\]
Since \(j_p(z)\ge1\), the only possibility is \(j_p(z)=1\) and \(\delta_z=0\).
\end{proof}

The preceding proposition identifies the exact form of the missing transfer statement.

\begin{problem}[Critical projection transfer]\label{prob:transfer}
Prove that every \(5\)-minimal counterexample has a critical element \(p\) such that
\begin{equation}\label{eq:transfer}
\boxed{
c_p-2j_p(z)\le2\delta_z
\qquad\text{for every }z\ne p.}
\end{equation}
\end{problem}

Indeed, \cref{eq:transfer} and \cref{eq:projection-frequency} would give
\[
f_{\pi_p(\F)}(z)
=t-\delta_z-j_p(z)
\le t-\frac{c_p}{2}
=\frac{\abs{\pi_p(\F)}}2
=\frac{2t-c_p}{2}
\]
for every surviving coordinate \(z\), so \(\pi_p(\F)\) would be a smaller counterexample, a contradiction.  Conversely, \cref{prop:projection-counts} shows that any putative minimal counterexample must violate \cref{eq:transfer} at every critical coordinate.

The nonfull trace normal form localizes this obstruction further.

\begin{proposition}[Projection inside a nonfull pair]\label{prop:projection-nonfull}
Assume the notation of \cref{eq:T-P,eq:layer-decomp}, and let \(p\in P\cap K\).  Then
\begin{equation}\label{eq:J-intersection}
\J_p\longleftrightarrow\Hh_p\cap\W,
\end{equation}
in the sense that a lower endpoint is
\(
S\cup(P\setminus\set{p})
\)
with \(S\in\Hh_p\cap\W\).  Consequently,
\begin{align}
 c_p&=\abs{\Hh_p\cap\W},
\label{eq:cp-intersection}\\
 j_p(z)&=\abs{\set{S\in\Hh_p\cap\W:z\in S}}
\qquad(z\in T),
\label{eq:jp-T}\\
 j_p(q)&=c_p
\qquad(q\in P\setminus\set{p}).
\label{eq:jp-P}
\end{align}
Every projection witness for \(p\) lies in \(T\).
\end{proposition}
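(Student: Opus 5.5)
The plan is to determine $\J_p$ directly from the three-trace normal form of \cref{thm:trace-normal}.

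First take $A\in\J_p$. Then $p\notin A$ and $A\cup\set{p}\in\F$, so the trace $A\cap P$ avoids $p$ while $(A\cup\set{p})\cap P=(A\cap P)\cup\set{p}$ contains $p$. By \cref{eq:three-traces}, the trace of $A\cup\set{p}$ is either $P$ or $P\setminus\set{q}$ for some $q\ne p$. In the second case, $A\cap P=P\setminus\set{p,q}$. This is not an admissible trace: it is nonempty or empty according as $k\ge3$ gives $k-2\ge1$, and since $k\ge3$ it is a nonempty proper subset of size $k-2$, which is not of the form $P\setminus\set{r}$. Hence the trace of $A\cup\set{p}$ is $P$, so $A\cap P=P\setminus\set{p}$. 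Writing $S=A\cap T$, we get $A=S\cup(P\setminus\set{p})$ with $S\in\Hh_p$, and $A\cup\set{p}=S\cup P$ with $S\in\W$.

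Conversely, if $S\in\Hh_p\cap\W$, then $S\cup(P\setminus\set{p})$ and $S\cup P$ both lie in $\F$ and differ exactly by $p$. So $S\cup(P\setminus\set{p})\in\J_p$. The map $S\mapsto S\cup(P\setminus\set{p})$ is injective because $S\subseteq T$ and $T\cap P=\varnothing$. This gives the bijection \cref{eq:J-intersection} and hence \cref{eq:cp-intersection}.

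For \cref{eq:jp-T}, take $z\in T$. Then $z\in S\cup(P\setminus\set{p})$ if and only if $z\in S$. For \cref{eq:jp-P}, take $q\in P\setminus\set{p}$. Every lower endpoint has trace $P\setminus\set{p}\ni q$, so $j_p(q)=c_p$.

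Finally, let $z$ be a witness. If $z\in P\setminus\set{p}$, then \cref{eq:projection-witness} reads $c_p-2c_p\ge2\delta_z+1$, i.e.\ $-c_p\ge 2\delta_z+1\ge1$. This contradicts \cref{eq:cp-positive}. Since $U=T\sqcup P$, the witness lies in $T$.

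The only delicate step is ruling out the trace $P\setminus\set{q}$ for the upper endpoint, and this is where $k\ge3$ is used.
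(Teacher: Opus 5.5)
Your proof is correct and follows the paper's own argument. You use the three-trace normal form to force every lower endpoint to have trace $P\setminus\set{p}$ and its $p$-extension to have trace $P$, read off $c_p$, $j_p(z)$ and $j_p(q)=c_p$ from the common $T$-part, and then observe that $-c_p\ge2\delta_q+1$ is impossible. Your use of $k\ge3$ to rule out an upper trace $P\setminus\set{q}$ (which would give the inadmissible lower trace $P\setminus\set{p,q}$) spells out a step the paper only asserts.
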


\begin{proof}
A member avoiding \(p\) can be paired with its \(p\)-extension only when its \(P\)-trace is \(P\setminus\set{p}\) and the upper trace is \(P\).  The two members have the same \(T\)-part \(S\), which proves \cref{eq:J-intersection,eq:cp-intersection,eq:jp-T}.  Every \(q\in P\setminus\set{p}\) belongs to every lower endpoint, giving \cref{eq:jp-P}.  For such \(q\),
\[
c_p-2j_p(q)=-c_p\le2\delta_q,
\]
so it cannot satisfy the witness inequality.  Thus every witness lies in \(T\).
\end{proof}

\section{The critical join-cover number}\label{sec:critical-cover}

Let \(\mathfrak J(\F)\) denote the set of join-irreducible members of \(\F\).  Thus \(B\in\mathfrak J(\F)\) if
\[
B=A_1\cup A_2,
\qquad A_1,A_2\in\F,
\]
forces \(B=A_1\) or \(B=A_2\).  Every member of a finite union-closed family is the union of the join-irreducible members below it.  Moreover, deleting any collection of join-irreducible members leaves a union-closed family: a union of two undeleted members cannot be a deleted join-irreducible.

\begin{definition}[Critical join-cover number]\label{def:critical-cover}
For a \(5\)-minimal counterexample, define
\begin{equation}\label{eq:rho-c}
\rho_c(\F)
:=
\min\set{s:\ \exists B_1,\dots,B_s\in\mathfrak J(\F)
\text{ with }K\subseteq B_1\cup\cdots\cup B_s}.
\end{equation}
\end{definition}

\begin{proposition}[A priori bounds]\label{prop:rho-bounds}
For a \(5\)-minimal counterexample,
\[
3\le \rho_c(\F)\le5.
\]
If \(B_1,\dots,B_s\) is a minimal critical join-cover and
\(
V:=B_1\cup\cdots\cup B_s\ne U
\), then \(s\le4\).
\end{proposition}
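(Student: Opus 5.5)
The plan is to prove the three assertions separately. The lower bound comes from a deletion argument and minimality. The two upper bounds come from the height bound applied to the chain of partial unions of a join-cover.

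\emph{Lower bound.} Suppose \(K\subseteq B_1\cup B_2\) with \(B_1,B_2\in\mathfrak J(\F)\), allowing \(B_1=B_2\); we first arrange two \emph{distinct} join-irreducibles covering \(K\).
\begin{enumerate}[label=\textup{(\alph*)},leftmargin=2.5em]
\item If only one join-irreducible \(B\) is available, add an inclusion-minimal member \(A_0\ne B\). Minimal members are join-irreducible.
\item Such an \(A_0\) exists. Otherwise \(B\) is the unique minimal member, so every member contains \(B\supseteq K\), and a critical coordinate would have frequency \(2t>t\).
\end{enumerate}
Now delete the two distinct join-irreducibles. As noted before \cref{def:critical-cover}, the result is union-closed. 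It has \(2t-2\ge2\) nonempty members, since \(t\ge2\) by \cref{lem:critical-three}, and its height is still at most five. Each critical coordinate lies in one of the deleted sets, so its frequency drops to at most \(t-1\). Every noncritical coordinate already had frequency at most \(t-1\). Hence no coordinate exceeds \((2t-2)/2\), and we have a smaller counterexample, contradicting \(5\)-minimality. Thus \(\rho_c(\F)\ge3\).

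\emph{Upper bounds via chains.} Let \(B_1,\dots,B_s\) be any critical join-cover that is minimal, meaning no \(B_i\) can be omitted. Set \(D_i=B_1\cup\cdots\cup B_i\in\F\).
\begin{enumerate}[label=\textup{(\alph*)},leftmargin=2.5em,resume]
\item If \(D_i=D_{i-1}\) for some \(i\), then \(B_i\subseteq D_{i-1}\). Omitting \(B_i\) would still cover \(K\), contradicting minimality.
\item Hence \(D_1\subsetneq D_2\subsetneq\cdots\subsetneq D_s=V\) is an \(s\)-chain in \(\F\), so \(s\le H(\F)\le5\).
\end{enumerate}
A minimal cover exists: every critical \(x\in U\) lies in some join-irreducible member, because \(U\) is a union of join-irreducibles, and a finite cover can be pruned to a minimal one. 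Therefore \(\rho_c(\F)\le5\).

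If moreover \(V\ne U\), then \(U\in\F\) extends the chain to \(D_1\subsetneq\cdots\subsetneq D_s\subsetneq U\) of length \(s+1\le5\), giving \(s\le4\).

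The only delicate point is the padding step in the lower bound, where a single covering join-irreducible must be supplemented by a second distinct one. Everything else is direct bookkeeping with the height bound.
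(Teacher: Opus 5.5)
Your proof is correct and follows the paper's argument. For the lower bound, deleting the covering join-irreducibles gives a smaller union-closed counterexample; for the upper bounds, irredundancy of a minimal cover makes the partial unions a strict chain, which is extended by \(U\) when \(V\ne U\). The only difference is cosmetic: the paper handles a one-member cover by deleting that single member, leaving \(2t-1\) members with all frequencies at most \(t-1\), whereas you pad it with an inclusion-minimal member.
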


\begin{proof}
If one or two join-irreducible members covered all critical coordinates, deleting them would decrease the frequency of every critical coordinate and leave every noncritical frequency below \(t\).  The remaining family would have respectively \(2t-1\) or \(2t-2\) members, would still be union-closed and of height at most five, and would still have no strict-majority coordinate.  This contradicts minimality.  Hence \(\rho_c(\F)\ge3\).

For a minimal cover \(B_1,\dots,B_s\), each \(B_i\) contains a private critical coordinate
\[
x_i\in B_i\setminus\bigcup_{j\ne i}B_j.
\]
Consequently, after any ordering of the cover,
\[
B_1
\subsetneq B_1\cup B_2
\subsetneq\cdots\subsetneq
B_1\cup\cdots\cup B_s=V
\]
is an \(s\)-member chain.  Thus \(s\le5\).  If \(V\ne U\), adjoining \(U\) gives an \((s+1)\)-member chain, so \(s\le4\).
\end{proof}

The upper value in \cref{prop:rho-bounds} is impossible.  The proof proceeds by separating the Boolean tops generated by five private critical coordinates from the residual members.  The residual private supports form a regular multihypergraph on five vertices.  Height two in every two-support fiber then supplies enough rigidity to contradict the frequency capacity of the nonprivate coordinates.

\begin{theorem}[Five-cover exclusion]\label{thm:five-cover-exclusion}
A \(5\)-minimal counterexample cannot satisfy \(\rho_c(\F)=5\).  Consequently,
\begin{equation}\label{eq:rho-34}
\boxed{\rho_c(\F)\in\set{3,4}.}
\end{equation}
\end{theorem}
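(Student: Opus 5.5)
Suppose \(\rho_c(\F)=5\) and fix a minimal critical join-cover \(B_1,\dots,B_5\) with private critical coordinates \(x_i\in B_i\setminus\bigcup_{j\ne i}B_j\).

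\emph{Step 1: rigidity of the chain.} By \cref{prop:rho-bounds}, \(V=B_1\cup\cdots\cup B_5=U\). For every ordering of the cover, the partial unions form a five-member chain ending at \(U\), so this chain is saturated. Hence every subunion \(B_I:=\bigcup_{i\in I}B_i\), with \(I\subseteq[5]\) nonempty, is a member of \(\F\). These \(31\) sets are pairwise distinct, because their traces on \(X:=\{x_1,\dots,x_5\}\) are exactly \(I\).

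Saturation also forces the following. Take any \(F\in\F\) and let \(I=\{i:x_i\in F\}\) be its trace. Then \(F\cup B_I\) has the same trace, and \(F\cup B_I\subseteq B_I\) would be needed, since otherwise \(F\cup B_I\) could be inserted strictly into a six-chain \(B_{i_1}\subsetneq\cdots\subsetneq U\) passing through \(B_I\). The intended consequence is \(F\subseteq B_I\) whenever \(\abs I\ge1\). For \(\abs{I}=0\), \(F\) lies below every \(B_i\)-chain start, so its own chain already has length six, which is impossible. Since \(F\subseteq B_{I}\) with trace \(I\), a member with trace \(I\) gives a chain of length \(\abs{I}\) below it together with the completion above, and this forces height-two-type constraints inside each trace fiber.

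\emph{Step 2: Boolean tops and residual fibers.} I partition \(\F\) by trace \(I\subseteq[5]\). The fiber of trace \(I\) has top \(B_I\), and the chain count above shows that the fiber has height at most \(6-\abs{I}\) but, more usefully, at most one extra level below the top when \(\abs I\) is small. Call the non-top members \emph{residual}. Their private supports (traces) form a multihypergraph \(\mathcal R\) on the vertex set \([5]\). Since every \(x_i\) is critical, \(f(x_i)=t\). The Boolean tops contribute \(16\) to each \(f(x_i)\), so each vertex has residual degree \(t-16\), and \(\mathcal R\) is regular. The total is \(2t=31+\abs{\mathcal R}\), so \(\abs{\mathcal R}=2t-31\), and regularity gives \(\sum_{R}\abs{R}=5(t-16)\). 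The average residual support size is therefore \(5(t-16)/(2t-31)\), which is slightly above \(5/2\) for large \(t\).

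\emph{Step 3: the two-support fibers and the contradiction.} I intend to show that a residual of trace size \(\ge3\) is impossible, or at least rare. Such a residual \(F\subsetneq B_I\) yields a chain \(B_i\subsetneq B_{ij}\subsetneq F\)-type extension, or directly \(F\subsetneq B_I\subsetneq\cdots\subsetneq U\), which together with a singleton-trace member below \(F\) gives six members. This would push the average support down to at most \(2\), contradicting the \(5/2\) average. In the genuinely hard subcase the residuals concentrate on two-element supports. There I apply \cref{lem:height2-extremal} to each two-support fiber, which has height two by the chain bound. This bounds each fiber by \(\abs{B_{ij}\setminus X}+1\) and forces the fiber to have the coatom form when it is large. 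Those coatoms put the nonprivate coordinates \(U\setminus X\) in many members. Summing over the ten pairs, each \(q\in U\setminus X\) then receives frequency exceeding \(t\). The resulting count is convex in the fiber sizes and is minimized at the balanced distribution, where it still exceeds the capacity \(\sum_{q\notin X}f(q)\le(n-5)t\), using \(t\ge2n-1\).

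The main obstacle is this final convexity estimate. It requires showing that nonprivate coordinates absent from a fiber top cannot absorb the missing frequency, and handling the three-support residuals exactly rather than heuristically.
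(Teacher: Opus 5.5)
Your Step~3 rests on a false claim, so there is a genuine gap. Residual members of private support at least three are neither impossible nor rare.

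For each $u\in R_0:=U\setminus X$, the set $C_u$ contains every $x_i$ by \cref{lem:Ca-properties}(iii) and omits $u$. So it is a residual member of support $[5]$. Your own count also forces many such members. With $d:=t-16$ there are $2d+1$ residuals carrying $5d$ private incidences, and the average $5d/(2d+1)$ is slightly \emph{below} $5/2$, not above. Hence at least $(d-2)/3$ residuals have support of size at least three.

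The chain you sketch cannot rule them out. The chain $F\subsetneq B_I\subsetneq\cdots\subsetneq U$ has only $7-\abs I\le4$ members when $\abs I\ge3$, and nothing forces a long chain below $F$.

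Two smaller errors and an omission:
\begin{itemize}
\item The correct fiber bound is $H(\F_I)\le\abs I$: a chain in $\F_I$ followed by the $5-\abs I$ missing $B_j$'s. Your bound $6-\abs I$ already fails for $I=[5]$, since $C_u\subsetneq U$ is a two-member chain in $\F_{[5]}$.
\item You must exclude $R_0=\varnothing$ separately. Then $\F$ would be the $31$ nonempty subsets of $X$, which has odd size.
\end{itemize}

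Hence the whole contradiction has to come from the final count, which you leave open. The comparison you suggest is not what closes it. The two-support fibers need not be extremal, no individual nonprivate coordinate is shown to exceed $t$, and $t\ge2n-1$ only yields $d\ge2R-7$, where $R:=\abs{R_0}$. The paper's count rests on four facts.
\begin{itemize}
\item[(a)] For $u\in R_0$, the index set $\sigma(u)=\set{i:u\in B_i}$ has at least two elements, because the critical coatoms force $B_{[5]\setminus\{i\}}=U\setminus\{x_i\}$. So $u$ lies in at least $24$ Boolean tops and in at most $d-8$ residual members.
\item[(b)] Remove the $R$ full-support members $C_u$ \emph{before} measuring excess. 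The other $2d+1-R$ residuals have support at least two (singleton fibers are $\{B_i\}$) and carry $5d-5R$ incidences. Their excess over two is therefore $d-3R-2$, so at least $P\ge d+2R+3$ of them have support exactly two. Without this separation one only gets $P\ge d+3$, which is too weak for the final inequality.
\item[(c)] A two-support fiber has height at most two. So the complements in $B_J$ of its $q_J$ residuals are pairwise disjoint nonempty subsets of $B_J\cap R_0$, which yields at least $q_J(q_J-1)$ nonprivate incidences.
\item[(d)] Separation puts $u\in C_v$ or $v\in C_u$ for every pair in $R_0$, adding $\binom R2$ incidences.
\end{itemize}

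Combining these gives $\sum_J q_J(q_J-1)\le R(d-8)-\binom R2$. Cauchy--Schwarz gives $\sum_J q_J(q_J-1)\ge P^2/10-P$, and this is increasing in the relevant range. Substituting $P\ge d+2R+3$ leaves a difference with numerator $d^2-6Rd+9R^2-4d+67R-21>0$, which is the contradiction. The paper additionally proves $\abs{\sigma(u)}=2$, $R\ge7$ and $d\ge4R+4$, and uses them to certify this positivity.
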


\subsection{Boolean fibers under a hypothetical five-cover}

Assume for contradiction that \(\rho_c(\F)=5\), and let \(B_1,\dots,B_5\) be a minimal critical join-cover.  By \cref{prop:rho-bounds},
\begin{equation}\label{eq:five-cover-U}
B_1\cup\cdots\cup B_5=U.
\end{equation}
Choose private critical coordinates
\begin{equation}\label{eq:private-xi}
x_i\in B_i\setminus\bigcup_{j\ne i}B_j
\qquad(i\in[5]),
\end{equation}
put \(X=\set{x_1,\dots,x_5}\), and for every nonempty \(I\subseteq[5]\) set
\begin{equation}\label{eq:SI}
S_I:=\bigcup_{i\in I}B_i.
\end{equation}
The \(31\) members \(S_I\) are pairwise distinct, since \(x_i\in S_I\) exactly when \(i\in I\).
For \(A\in\F\), define its private support
\[
I(A):=\set{i\in[5]:x_i\in A}.
\]

\begin{lemma}[Saturated Boolean fibers]\label{lem:boolean-fibers}
For every \(A\in\F\),
\begin{equation}\label{eq:saturated-containment}
I(A)\ne\varnothing,
\qquad
A\subseteq S_{I(A)}.
\end{equation}
For nonempty \(I\subseteq[5]\), the fiber
\[
\F_I:=\set{A\in\F:I(A)=I}
\]
has maximum member \(S_I\) and height at most \(\abs I\).  In particular,
\begin{equation}\label{eq:singleton-fiber}
\F_{\set{i}}=\set{B_i}.
\end{equation}
\end{lemma}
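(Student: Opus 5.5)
We prove the three assertions of \cref{lem:boolean-fibers} in turn, each by a short argument.

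\emph{Nonempty private support and containment.}
Fix \(A\in\F\) and write \(A=J_1\cup\cdots\cup J_m\) with each \(J_\ell\in\mathfrak J(\F)\) and \(J_\ell\subseteq A\); such a decomposition exists for every member.

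First suppose \(I(A)=\varnothing\). Then \(A\) contains no \(x_i\). The first step is to show this contradicts minimality of the five-cover.

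Consider the four members \(B_2,B_3,B_4,B_5\) together with all join-irreducibles \(J\subseteq A\). The coordinates they cover are \(S_{\{2,3,4,5\}}\cup A\), which contains \(K\) except possibly some critical coordinates lying only in \(B_1\).

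A cleaner route is available. Since \(B_1\) is join-irreducible, \(B_1\not\subseteq A\) because \(x_1\notin A\). This alone does not give a contradiction, so we argue directly on the covering property instead.

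Every critical coordinate of \(A\) lies in some \(B_j\). Take \(i\) with \(A\cap B_i\cap K\neq\varnothing\) if such an \(i\) exists. The key point is that \(A\) contributes nothing to \(X\).

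The intended mechanism is that \(\F\setminus\{B_i\}\) would not contradict anything on its own. Instead, a \(5\)-chain argument applies:
\[
A\subsetneq A\cup B_1\subsetneq A\cup S_{\{1,2\}}\subsetneq\cdots\subsetneq A\cup S_{[5]}=U.
\]
Each inclusion is strict because of the successive private coordinates \(x_1,\dots,x_5\). This gives six members, a six-chain, contradicting \(H(\F)\le5\). Hence \(I(A)\ne\varnothing\).

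For the containment \(A\subseteq S_{I(A)}\), the analogous chain works. Suppose instead that \(A\not\subseteq S_{I}\) with \(I=I(A)\), \(\abs I=s\). Order \([5]\setminus I=\{i_1,\dots,i_{5-s}\}\) and consider
\[
A\subsetneq A\cup S_I\subsetneq A\cup S_I\cup B_{i_1}\subsetneq\cdots\subsetneq U.
\]
This chain by itself only has \(7-s\) members. The missing length must therefore come from below \(A\), namely from the join-irreducibles.

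To supply it, write \(A\) as a union of join-irreducibles \(J\). Each \(J\) with \(I(J)=\varnothing\) is excluded by the first step. So each such \(J\) has a nonempty support inside \(I\), and we induct on \(\abs I\) to show \(J\subseteq S_{I(J)}\subseteq S_I\), using a chain \(S_{\{i\}}\subsetneq S_{\{i,i'\}}\subsetneq\cdots\) that ends in a hypothetical extra coordinate.

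The base case \(\abs{I(J)}=1\) is the heart of the matter. Here one shows \(J=B_i\): otherwise replacing \(B_i\) by \(J\) in the cover keeps private coordinates and covers \(K\cap(B_i\cup J)\). Comparing with minimality, and using that \(J\) and \(B_i\) are both join-irreducible containing \(x_i\), one derives a six-chain \(J\subsetneq J\cup B_i\subsetneq\cdots\).

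\emph{Fiber structure.}
Once the containment is established, \(\F_I\) is union-closed, since supports add under union. Every member of \(\F_I\) lies inside \(S_I\), and \(S_I\in\F_I\), so \(S_I\) is the maximum of \(\F_I\).

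For the height bound, take any chain in \(\F_I\) ending at \(S_I\). Extend it by adjoining the remaining \(B_j\), \(j\notin I\), one at a time, up to \(U\). This adds \(5-\abs I\) strictly larger members, with strictness witnessed by the private coordinates. Height at most five then forces the chain in \(\F_I\) to have at most \(\abs I\) members. The singleton case gives \(\F_{\{i\}}=\{B_i\}\).

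\emph{Main obstacle.}
The main obstacle is the containment \(A\subseteq S_{I(A)}\), equivalently the base case \(J=B_i\) for join-irreducibles of singleton support. I would attack it first through the six-chain obtained by adjoining a non-\(S_I\) coordinate at the bottom of the Boolean chain.
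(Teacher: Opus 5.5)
Your arguments for $I(A)\ne\varnothing$ (the chain $A\subsetneq A\cup B_1\subsetneq\cdots\subsetneq U$) and for the height bound on $\F_I$ are correct and match the paper's. However, the containment $A\subseteq S_{I(A)}$ is never proved, and the remaining claims (that $S_I$ is the maximum of $\F_I$, hence $\F_{\{i\}}=\{B_i\}$) depend on it.

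The chain you write, $A\subsetneq A\cup S_I\subsetneq\cdots\subsetneq U$, has only $7-s$ members. Its first step is also not strict whenever $S_I\subseteq A$, and that includes the case $A\supsetneq S_I$ you are trying to rule out. The detour through join-irreducibles does not help. Reducing to $J\subseteq S_{I(J)}$ for join-irreducible $J\subseteq A$ is the same statement restricted to a subclass. Your base case appeals to a cover-replacement/minimality comparison that is never made precise, and the chain $J\subsetneq J\cup B_i\subsetneq\cdots$ you cite is not a chain when $B_i\subsetneq J$. The inductive step is only gestured at. You yourself flag this as the unresolved main obstacle.

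The missing idea is to put the Boolean tops, not $A$, at the bottom of the chain. Let $I=I(A)=\{i_1,\dots,i_s\}$ and $[5]\setminus I=\{j_1,\dots,j_{5-s}\}$, and suppose $A\not\subseteq S_I$. Consider
\[
B_{i_1}\subsetneq B_{i_1}\cup B_{i_2}\subsetneq\cdots\subsetneq S_I\subsetneq A\cup S_I\subsetneq A\cup S_I\cup B_{j_1}\subsetneq\cdots\subsetneq A\cup S_{[5]}=U .
\]
Each step is strict:
\begin{enumerate}
\item The first $s-1$ steps are strict because of the private coordinates $x_{i_\ell}$.
\item The step $S_I\subsetneq A\cup S_I$ is strict exactly because $A\not\subseteq S_I$.
\item The last $5-s$ steps are strict because $x_j\notin A\cup S_I$ for $j\notin I$: $x_j\notin A$ since $j\notin I(A)$, and $x_j\notin S_I$ by privacy.
\end{enumerate}
This chain has $s+1+(5-s)=6$ members, contradicting $H(\F)\le5$. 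This is the paper's argument, and it needs no join-irreducibles and no induction. Your phrase about a chain $S_{\{i\}}\subsetneq S_{\{i,i'\}}\subsetneq\cdots$ ending in an extra coordinate points at exactly this construction; applied directly to $A$, it finishes the proof.
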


\begin{proof}
If \(I(A)=\varnothing\), then
\[
A
\subsetneq A\cup B_1
\subsetneq\cdots\subsetneq
A\cup B_1\cup\cdots\cup B_5=U
\]
is a six-member chain, each step adding a private coordinate.

Let \(I=I(A)\ne\varnothing\).  If \(A\nsubseteq S_I\), order the members of \(I\), take the chain of their partial unions up to \(S_I\), insert \(A\cup S_I\), and then adjoin the remaining \(B_j\)'s one at a time.  This again gives six members.  Hence \(A\subseteq S_I\), so \(S_I\) is the maximum of \(\F_I\).

A chain of \(\abs I+1\) members in \(\F_I\), followed by the \(5-\abs I\) missing basis members, would have six members.  Therefore \(H(\F_I)\le\abs I\).  When \(\abs I=1\), the fiber consists only of its maximum, proving \cref{eq:singleton-fiber}.
\end{proof}

Put
\begin{equation}\label{eq:R0}
R_0:=U\setminus X,
\qquad R:=\abs{R_0}.
\end{equation}
One has \(R\ge1\).  Indeed, if \(R=0\), then \(B_i=\set{x_i}\), and \cref{lem:boolean-fibers} forces \(\F\) to consist of the \(31\) nonempty subsets of \(X\), contrary to the even cardinality of a minimal counterexample.
For \(u\in R_0\), define its basis support
\begin{equation}\label{eq:sigma-u}
\sigma(u):=\set{i\in[5]:u\in B_i}.
\end{equation}

\begin{lemma}[Nonprivate support]\label{lem:support-atleast2}
For every \(u\in R_0\), one has \(\abs{\sigma(u)}\ge2\).
\end{lemma}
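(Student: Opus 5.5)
The plan is to argue by contradiction, combining the saturated containment of \cref{lem:boolean-fibers} with the critical-coatom theorem \cref{thm:critical-coatom}. First, \(\abs{\sigma(u)}\ge1\) for every \(u\in R_0\): by \cref{eq:five-cover-U} the sets \(B_1,\dots,B_5\) cover \(U\), so \(u\) lies in some \(B_i\). It therefore suffices to exclude \(\sigma(u)=\set{i}\) for a single index \(i\).

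Assume \(\sigma(u)=\set{i}\). I will show that every member avoiding the private coordinate \(x_i\) also avoids \(u\). Let \(A\in\F\) with \(x_i\notin A\), so \(i\notin I(A)\). By \cref{eq:saturated-containment}, \(A\subseteq S_{I(A)}=\bigcup_{j\in I(A)}B_j\). No \(B_j\) with \(j\ne i\) contains \(u\), so \(u\notin S_{I(A)}\), and hence \(u\notin A\).

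Consequently \(u\notin C_{x_i}\), where \(C_{x_i}\) is the union of all members avoiding \(x_i\). On the other hand, \(x_i\) is critical by the choice in \cref{eq:private-xi}. So \cref{thm:critical-coatom} gives \(C_{x_i}=U\setminus\set{x_i}\), which contains \(u\) because \(u\ne x_i\) (as \(u\in R_0=U\setminus X\)). This is a contradiction, so \(\abs{\sigma(u)}\ge2\).

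The only point needing care is that the containment \(A\subseteq S_{I(A)}\) from \cref{lem:boolean-fibers} applies to every member of \(\F\). This is guaranteed because that lemma also gives \(I(A)\ne\varnothing\) for all \(A\), so \(S_{I(A)}\) is defined.

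I would not use the weaker domination route \(\F_u\subseteq\F_{x_i}\) together with separation. It gives no contradiction when \(f(u)<t\), which is why the argument goes through the coatom \(C_{x_i}\) instead.
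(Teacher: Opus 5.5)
Your proof is correct and takes essentially the same route as the paper: both combine the saturated containment \(A\subseteq S_{I(A)}\) of \cref{lem:boolean-fibers} with the critical coatom \(C_{x_i}=U\setminus\set{x_i}\). The paper applies the containment only to the single member \(C_{x_i}\), whose private support is \([5]\setminus\set{i}\), to obtain \(S_{[5]\setminus\set{i}}=U\setminus\set{x_i}\). You instead apply it to every member avoiding \(x_i\) and then take the union, which amounts to the same argument.
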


\begin{proof}
For each \(i\), the critical coatom \(C_{x_i}=U\setminus\set{x_i}\) has private support \([5]\setminus\set{i}\).  By \cref{lem:boolean-fibers},
\(
C_{x_i}\subseteq S_{[5]\setminus\set{i}}
\), while the reverse inclusion holds because \(S_{[5]\setminus\set{i}}\) avoids \(x_i\).  Hence
\begin{equation}\label{eq:coatom-top}
S_{[5]\setminus\set{i}}=U\setminus\set{x_i}.
\end{equation}
If \(\sigma(u)=\set{i}\), then \(u\) belongs to the right-hand side of \cref{eq:coatom-top} but not to the left-hand side.  The support cannot be empty because the \(B_i\)'s cover \(U\).
\end{proof}

\subsection{The residual support multihypergraph}

Put
\[
m_I:=\abs{\F_I},
\qquad q_I:=m_I-1,
\]
so that \(q_I\) counts the members of the fiber other than its Boolean top \(S_I\).  Let
\begin{equation}\label{eq:Q-d}
Q:=\sum_{\varnothing\ne I\subseteq[5]}q_I
=\abs\F-31
=2d+1,
\qquad d:=t-16.
\end{equation}
Every private critical coordinate \(x_i\) occurs in \(16\) Boolean tops, and hence
\begin{equation}\label{eq:residual-regular}
\sum_{I\ni i}q_I=d
\qquad(i\in[5]).
\end{equation}
By \cref{eq:singleton-fiber}, \(q_{\set{i}}=0\), so every residual private support has size at least two.

For \(u\in R_0\), let \(s=\abs{\sigma(u)}\).  Among the \(31\) Boolean tops, \(u\) appears in
\begin{equation}\label{eq:bs}
b_s=32-2^{5-s}
\end{equation}
members.  If \(e(u)\) denotes its number of occurrences among the \(Q\) residual members, then \(f(u)\le t=16+d\) gives
\begin{equation}\label{eq:e-upper}
e(u)\le d-\beta_s,
\qquad
\beta_s:=16-2^{5-s}.
\end{equation}
Thus
\[
\begin{array}{c|cccc}
 s&2&3&4&5\\ \hline
 b_s&24&28&30&31\\
 \beta_s&8&12&14&15
\end{array}
\]

For every \(u\in R_0\), the member \(C_u\) contains all private critical coordinates by \cref{lem:Ca-properties}; it is proper because it avoids \(u\).  Hence \(C_u\) is a residual member of full private support.  The \(R\) members \(C_u\) are distinct.

Remove these \(R\) distinguished full-support residual members.  The remaining \(Q-R\) residual members have total private incidence \(5d-5R\) and support size at least two.  Their total excess over size two is
\begin{equation}\label{eq:E}
E:=5d-5R-2(Q-R)=d-3R-2\ge0.
\end{equation}
Let
\begin{equation}\label{eq:P-pair}
P:=\sum_{\abs I=2}q_I
\end{equation}
be the number of two-support residual members.  At most \(E\) of the \(Q-R\) remaining members can have support at least three, and therefore
\begin{equation}\label{eq:P-lower}
P\ge Q-R-E=d+2R+3.
\end{equation}

\subsection{Rigidity of the two-support fibers}

Fix a two-set \(J\subseteq[5]\).  The fiber \(\F_J\) has height at most two and top \(S_J\).  The complements in \(S_J\) of its \(q_J\) residual members are pairwise disjoint nonempty subsets consisting only of elements of \(R_0\).  If
\[
N_J:=\abs{S_J\cap R_0},
\]
then
\begin{equation}\label{eq:qJ-NJ}
q_J\le N_J.
\end{equation}
Moreover, the total number of incidences between \(R_0\) and the residual members of this fiber is at least
\begin{equation}\label{eq:qJ-incidence}
q_J(q_J-1),
\end{equation}
because each \(u\in S_J\cap R_0\) is omitted by at most one residual member.

\begin{lemma}[Two-support rigidity]\label{lem:two-support-rigidity}
For every \(u\in R_0\),
\begin{equation}\label{eq:sigma-two}
\abs{\sigma(u)}=2.
\end{equation}
Moreover,
\begin{equation}\label{eq:R-atleast7}
R\ge7.
\end{equation}
\end{lemma}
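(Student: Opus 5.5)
The plan is to double count incidences between $R_0$ and the residual members, coordinate by coordinate, and to play the lower bound coming from the height-two rigidity \cref{eq:qJ-incidence} against the frequency cap \cref{eq:e-upper}. I have not carried the final numerical combination through, so some steps below are provisional.

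\textbf{Per-coordinate lower bound.} Fix $u\in R_0$ with $s=\abs{\sigma(u)}$. I will bound $e(u)$ from below by three disjoint contributions.
\begin{enumerate}[label=\textup{(\alph*)},leftmargin=2.3em]
\item The $R-1$ distinguished members $C_v$ with $v\in R_0\setminus\set{u}$. Each is a full-support residual member, and it contains $u$ by \cref{lem:Ca-properties}\textup{(iii)}. That lemma is stated for critical $x$; if needed I will instead use that the height-two structure forces $C_v=U\setminus\set{v}$ or a similar statement. Checking this is part of the work.
\item For every two-set $J$ with $J\cap\sigma(u)\ne\varnothing$ we have $u\in S_J$. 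The complements in $S_J$ of the residual members of $\F_J$ are pairwise disjoint, so $u$ is omitted by at most one of them. Hence $u$ lies in at least $q_J-1$ residual members of $\F_J$.
\item Residual members of support size at least three, which contribute nonnegatively.
\end{enumerate}
This gives
\[
e(u)\ \ge\ (R-1)+\sum_{\substack{\abs J=2\\ J\cap\sigma(u)\ne\varnothing}}(q_J-1)\ \le\ d-\beta_s ,
\]
where the right-hand inequality is \cref{eq:e-upper}.

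\textbf{Step 1: $\abs{\sigma(u)}=2$.} Suppose $s\ge3$. Then $J$ meets $\sigma(u)$ for all but at most one of the ten two-sets (for $s=3$ exactly the single pair inside the complement is missed). Hence the sum in the display is at least $P-\max_J q_J-10$. By \cref{eq:P-lower}, $P\ge d+2R+3$. Combined with $\beta_s\ge12$ and the cap $q_J\le N_J$, this should force
\[
d+2R+3-\max_J q_J-10+R-1\ \le\ d-12 .
\]
That is plausible only if a single pair $J$ carries almost all of $P$. I then rule out this concentration using the regularity \cref{eq:residual-regular}: a fiber $\F_{\set{a,b}}$ contributes only to $x_a$ and $x_b$, so $q_J\le d$. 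Summing the regularity over the three indices outside $J$ then shows that the other pairs also carry mass of order $d$. This bookkeeping, including the excess $E$ of \cref{eq:E} spent on supports of size at least three, is the step I expect to be the main obstacle.

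\textbf{Step 2: $R\ge7$.} Once all supports have size two, every $u\in R_0$ is a vertex-pair in $[5]$ and $\beta_s=8$. Summing the per-coordinate inequality over $u\in R_0$ gives
\[
\sum_J N_J(q_J-1)\ \le\ R\,(d-R-7).
\]
Here $\sum_J N_J=7R$, since each pair meets exactly seven two-sets. From this:
\begin{enumerate}[label=\textup{(\roman*)},leftmargin=2.3em]
\item Using $q_J\le N_J$ gives $P\le 7R$, and \cref{eq:P-lower} then gives $d\le 5R-3$.
\item Using $N_J\ge q_J$ with Cauchy--Schwarz on $P=\sum_J q_J$ bounds $\sum_J q_J(q_J-1)\ge P^2/10-P$.
\item The right-hand side must be nonnegative, so $d\ge R+7$.
\end{enumerate}
Combining these with $d\ge 3R+2$ from \cref{eq:E} and $P\ge d+2R+3$ should exclude $R\le6$ by a finite check of the few admissible $(R,d)$. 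If the quadratic bound is too weak, I would sharpen it using the fact that $q_J\ge1$ forces $N_J\ge1$ and hence some $u$ with $\sigma(u)$ meeting $J$.
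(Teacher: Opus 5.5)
There is a genuine gap in Step~1, at the point you flag as the main obstacle. For $s=3$ you need to rule out that the unique two-set $J_0$ disjoint from $\sigma(u)$ carries almost all of $P$, and you propose to do this with the regularity \cref{eq:residual-regular}. That route cannot close. Regularity only bounds $q_{J_0}$ by the number of two-support residual members through one private vertex, which is at most $d-R$. That gives $P-q_{J_0}-9\ge(d+2R+3)-(d-R)-9=3R-6$, a bound independent of $d$, whereas you need $e(u)>d-12$.

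The missing observation is already contained in \cref{eq:qJ-NJ}: $q_J\le N_J\le R$ for every two-set. For $J_0$ the top $S_{J_0}$ omits $u$, so in fact $q_{J_0}\le N_{J_0}\le R-1$. The two-support members alone then give $e(u)\ge P-(R-1)-9\ge d+R-5>d-12$. For $s\ge4$ there is no disjoint two-set, so $e(u)\ge P-10\ge d+2R-7>d-14$. There is no concentration issue to handle.

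Your contribution~(a) should be dropped rather than repaired. \Cref{lem:Ca-properties}(iii) applies only to critical coordinates. For $u,v\in R_0$, separation yields only this: for each unordered pair, the coordinate of larger frequency lies in the other's avoiding set. Nothing available gives $u\in C_v$ for all $v\ne u$. Your Step~2 summation rests on~(a), and its final finite check is not carried out, so Step~2 is also incomplete as written.

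Step~2 is unnecessary anyway. If $\sigma(u)=\set{a,b}$, the three two-sets inside $[5]\setminus\set{a,b}$ have tops omitting $u$, so each carries at most $R-1$ residual members. In each of the other seven two-support fibers, $u$ is omitted at most once. Hence $e(u)\ge P-3(R-1)-7\ge d-R-1$, and comparing with $e(u)\le d-8$ from \cref{eq:e-upper} gives $R\ge7$ at once. This single-coordinate count, using the cap $N_J\le R-1$ on fibers whose top misses $u$, is the entire argument in the paper.
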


\begin{proof}
Let \(s=\abs{\sigma(u)}\) and
\[
\gamma_s:=\binom{5-s}{2},
\]
the number of two-support types disjoint from \(\sigma(u)\).  In a disjoint fiber the top omits \(u\), so \cref{eq:qJ-NJ} gives at most \(R-1\) residual members.  In each of the other \(10-\gamma_s\) fibers, \(u\) is omitted by at most one residual member.  Thus the number of two-support residual members containing \(u\) is at least
\begin{equation}\label{eq:e-pair-lower}
P-\gamma_s(R-1)-(10-\gamma_s)
\ge d-7+2R-\gamma_s(R-2).
\end{equation}
This is a lower bound for \(e(u)\).

If \(s=3\), then \(\gamma_s=1\), and \cref{eq:e-pair-lower} is \(d+R-5>d-12\), contradicting \cref{eq:e-upper}.  If \(s=4\) or \(5\), then \(\gamma_s=0\), and the lower bound \(d+2R-7\) contradicts the corresponding upper bound in \cref{eq:e-upper}.  Together with \cref{lem:support-atleast2}, this proves \(s=2\).

For \(s=2\), one has \(\gamma_s=3\), so \cref{eq:e-pair-lower} gives \(e(u)\ge d-R-1\).  Comparing with \(e(u)\le d-8\) yields \(R\ge7\).
\end{proof}

\subsection{A lower bound for the residual degree}

\begin{lemma}\label{lem:d-lower}
Under the five-cover assumption,
\begin{equation}\label{eq:d-lower}
d\ge4R+4.
\end{equation}
\end{lemma}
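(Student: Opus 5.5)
I would prove \cref{lem:d-lower} by double counting the incidences between the nonprivate coordinates \(R_0\) and the \(Q\) residual members. I would bound this count above by the frequency capacity \cref{eq:e-upper} and below by the distinguished coatoms together with the rigidity of the two-support fibers. By \cref{lem:two-support-rigidity} every \(u\in R_0\) has \(\abs{\sigma(u)}=2\), so \cref{eq:e-upper} gives \(e(u)\le d-8\). Summing,
\[
\sum_{u\in R_0}e(u)\le R(d-8).
\]

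For the lower bound I would isolate two disjoint groups of residual members. The first group is the \(R\) distinguished members \(C_u\). Each \(C_u\) contains every element of \(R_0\setminus\set{u}\): the only coordinate it misses is \(u\), since \(C_u\) is the largest member avoiding \(u\) and, by the coatom-type argument, it should equal \(U\setminus\set{u}\). If that identification is not available here, I would fall back on \cref{lem:Ca-properties}(iii), which only guarantees that \(C_u\) contains the critical coordinates, and I would drop this contribution or replace it by whatever follows from \(f(u)\le t\). When the identification holds, this group contributes \(R(R-1)\) incidences.

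The second group is the two-support residual members. For a two-set \(J\), the fiber \(\F_J\) has height at most two and top \(S_J\), and the complements of its residual members in \(S_J\) are pairwise disjoint subsets of \(S_J\cap R_0\). Its residual members therefore carry at least
\[
q_JN_J-N_J\ge q_J(q_J-1)
\]
incidences with \(R_0\), which is \cref{eq:qJ-incidence}. These two groups are disjoint because the \(C_u\) have full private support.

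Convexity of \(x\mapsto x(x-1)\) over the ten two-sets, together with \(\sum_{\abs J=2}q_J=P\), gives
\[
\sum_{\abs J=2}q_J(q_J-1)\ge P\Bigl(\frac P{10}-1\Bigr).
\]
The right-hand side is increasing in \(P\) for \(P\ge5\), so I may substitute the lower bound \(P\ge d+2R+3\) from \cref{eq:P-lower}. Combining the upper and lower counts yields
\[
10R(d-8)\ge10R(R-1)+(d+2R+3)(d+2R-7),
\]
which is a quadratic inequality in \(d\). The final step is to solve this for \(d\) and read off \(d\ge4R+4\), using \(R\ge7\) from \cref{eq:R-atleast7} and \(d\ge3R+2\) from \cref{eq:E} to place \(d\) on the correct side of the roots.

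I expect the main obstacle to be this last algebraic step and the bookkeeping behind it. A quick expansion suggests the quadratic, \(d^2-(6R+4)d+14R^2+62R-21\le0\), has negative discriminant. That would make the inequality outright contradictory, and \cref{eq:d-lower} would then hold vacuously but in a suspiciously strong form. So I would recheck two points carefully: the claimed \(R(R-1)\) contribution from the \(C_u\), and the fact that the residual members counted in the two-support fibers are genuinely disjoint from the \(C_u\). If the coatom contribution has to be weakened, I would drop it and keep only the convexity estimate. From the resulting inequality \(10R(d-8)\ge(d+2R+3)(d+2R-7)\) I would then extract \(d\ge4R+4\) by comparison at the endpoint \(d=4R+4\), for \(R\ge7\).
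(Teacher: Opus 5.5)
\paragraph{The gap.} Your lower bound on $\sum_{u\in R_0}e(u)$ depends on the claim that each distinguished member $C_u$ contains all of $R_0\setminus\set{u}$. You justify this only by ``the coatom-type argument'', and that is not enough. \cref{thm:critical-coatom} covers only critical coordinates, \cref{lem:Ca-properties}(iii) puts only critical coordinates into $C_u$, and a point of $R_0$ need not be critical.

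Your fallback of dropping this contribution does not work. The inequality $10R(d-8)\ge(d+2R+3)(d+2R-7)$ is equivalent to $d^2-(6R+4)d+4R^2+72R-21\le0$. That only confines $d$ between two roots, and the smaller root lies below $3R+2$, so it cannot force $d\ge4R+4$. Concretely, $R=12$, $d=38=3R+2$ satisfies it together with $R\ge7$ and $E=0$, yet $d<4R+4$. So your route can only prove \cref{eq:d-lower} vacuously, by showing the five-cover data are contradictory. For that, the $C_u$ contribution is indispensable and must actually be proved.

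\paragraph{How to close it.} There are two ways. The weak one is the pairwise separation argument the paper uses in its convexity step. For $u,v\in R_0$ with $f(u)\ge f(v)$, having $u\notin C_v$ would give $\F_u\subseteq\F_v$ and hence equality, contradicting separation. This yields $\binom R2$ incidences. The strong one, your $R(R-1)$, is also true. With $\sigma(u)=\set{a,b}$ and $\set{c,d,e}=[5]\setminus\sigma(u)$, you have $B_c\subsetneq S_{\set{c,d}}\subsetneq S_{\set{c,d,e}}\subsetneq C_u$, where the last step is strict because $x_a\in C_u$. Height five then forces $C_u\cup X=U$ for every $X\in\F_u$, and separation gives $C_u=U\setminus\set{u}$.

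Write $d=3R+2+E$. Your combined inequality becomes $E^2+55R-25\le0$ with the pairwise count, or $E^2+5R^2+50R-25\le0$ with the full count. Both are impossible for $R\ge1$, so your negative discriminant is not a bookkeeping error. Once patched, your global convexity count refutes the five-cover configuration outright, and \cref{eq:d-lower} holds vacuously.

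\paragraph{Comparison with the paper.} The paper proves \cref{eq:d-lower} non-vacuously by a local count at one $u$ of maximal residual frequency. It uses the private degrees $d-R-H_i$, at most one omission of $u$ in each of the seven fibers meeting $\sigma(u)$, $q_{ab}\le R$, and $H_a+H_b\le2h\le2E$. Your patched computation shows that this lemma is not actually needed for \cref{thm:five-cover-exclusion}: the paper's own \cref{eq:convexity-required} already fails for every $d$, with no input from \cref{eq:d-lower}.
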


\begin{proof}
Choose \(u\in R_0\) of maximum residual frequency \(e(u)\), and write \(\sigma(u)=\set{a,b}\).  For every \(v\in R_0\setminus\set{u}\), one has \(u\in C_v\).  Otherwise \(\F_u\subseteq\F_v\); all nonprivate coordinates have the same Boolean-top frequency \(24\), and maximality of \(e(u)\) forces equality, contradicting separation.

Let \(\mathcal R_{\ge3}\) be the residual members of private support at least three after the distinguished members \(C_v\) have been removed, and put \(h:=\abs{\mathcal R_{\ge3}}\).  For \(i\in[5]\), let \(H_i\) count the members of \(\mathcal R_{\ge3}\) whose support contains \(i\).  Since each such member contributes at least one unit to the excess in \cref{eq:E},
\begin{equation}\label{eq:h-E}
h\le E.
\end{equation}
The residual degree of private vertex \(i\) is \(d\).  Of these occurrences, \(R\) come from the \(C_v\)'s and \(H_i\) from \(\mathcal R_{\ge3}\); hence its degree among two-support residual members is \(d-R-H_i\).

Let \(q_{ab}\) be the number of residual members with private support \(\set{a,b}\).  The number of two-support residual members whose support meets \(\set{a,b}\) is
\[
2(d-R)-H_a-H_b-q_{ab}.
\]
They lie in seven two-support fibers, and in each such fiber at most one member omits \(u\).  Adding the \(R-1\) incidences \(u\in C_v\) for \(v\ne u\), we obtain
\[
e(u)\ge2d-R-H_a-H_b-q_{ab}-8.
\]
Since \(e(u)\le d-8\), it follows that
\begin{equation}\label{eq:Hqab}
H_a+H_b+q_{ab}\ge d-R.
\end{equation}
By \cref{eq:qJ-NJ}, \(q_{ab}\le R\).  Therefore
\[
d-2R
\le H_a+H_b
\le2h
\le2E
=2(d-3R-2),
\]
which is equivalent to \cref{eq:d-lower}.
\end{proof}

\subsection{The convexity contradiction}

\begin{proof}[Proof of \cref{thm:five-cover-exclusion}]
Let \(q_1,\dots,q_{10}\) be the residual multiplicities of the ten two-support fibers, so that \(P=\sum_{\nu=1}^{10}q_\nu\).  By \cref{eq:qJ-incidence}, their total incidence with \(R_0\) is at least
\begin{equation}\label{eq:pair-incidence-lower}
\sum_{\nu=1}^{10}q_\nu(q_\nu-1).
\end{equation}
On the other hand, \cref{eq:e-upper,eq:sigma-two} give total residual nonprivate incidence at most \(R(d-8)\).

The distinguished members \(C_v\) already contribute at least \(\binom R2\) nonprivate incidences.  Indeed, for each unordered pair \(\set{u,v}\subseteq R_0\), assume \(f(u)\ge f(v)\).  If \(u\notin C_v\), then \(\F_u\subseteq\F_v\), and the frequency inequality forces equality and inseparability.  Hence \(u\in C_v\).  Consequently,
\begin{equation}\label{eq:pair-incidence-upper}
\sum_{\nu=1}^{10}q_\nu(q_\nu-1)
\le
R(d-8)-\frac{R(R-1)}2.
\end{equation}
Cauchy--Schwarz gives
\begin{equation}\label{eq:cauchy-q}
\sum_{\nu=1}^{10}q_\nu(q_\nu-1)
\ge
\frac{P^2}{10}-P.
\end{equation}
By \cref{eq:P-lower}, \(P\ge d+2R+3\).  In the present range the function \(P^2/10-P\) is increasing.  Thus \cref{eq:pair-incidence-upper,eq:cauchy-q} require
\begin{equation}\label{eq:convexity-required}
\frac{(d+2R+3)^2}{10}-(d+2R+3)
\le
R(d-8)-\frac{R(R-1)}2.
\end{equation}
The left-hand side minus the right-hand side is
\[
\frac{d^2-6Rd+9R^2-4d+67R-21}{10}.
\]
Write \(d=4R+4+\eta\) with \(\eta\ge0\), using \cref{eq:d-lower}.  The numerator becomes
\[
R^2+2R\eta+59R+\eta^2+4\eta-21,
\]
which is positive for \(R\ge7\).  This contradicts \cref{eq:convexity-required}.  Therefore \(\rho_c(\F)\ne5\), and \cref{prop:rho-bounds} completes the proof.
\end{proof}

\subsection{The nonfull four-cover branch}

\begin{theorem}[One-coordinate normal form]\label{thm:four-cover-normal}
Suppose \(\rho_c(\F)=4\), and let \(B_1,\dots,B_4\) be a minimal critical join-cover with
\[
V:=B_1\cup\cdots\cup B_4\ne U.
\]
Then
\begin{equation}\label{eq:outside-one}
U\setminus V=\set{w}
\end{equation}
for a noncritical coordinate \(w\).  Moreover,
\begin{equation}\label{eq:stable-decomp}
\F
=\A\mathbin{\dot\cup}
\set{B\cup\set{w}:B\in\mathcal B},
\end{equation}
where \(\A,\mathcal B\subseteq2^V\) satisfy
\begin{equation}\label{eq:stable-closure}
\A\join\A\subseteq\A,
\qquad
\mathcal B\join\mathcal B\subseteq\mathcal B,
\qquad
\A\join\mathcal B\subseteq\mathcal B,
\end{equation}
and
\begin{equation}\label{eq:stable-size}
\abs\A>\abs{\mathcal B}.
\end{equation}
The lower fiber \(\A\) contains the \(15\) distinct Boolean-top members generated by \(B_1,\dots,B_4\); its private-support fiber over \(I\subseteq[4]\) has height at most \(\abs I\).
\end{theorem}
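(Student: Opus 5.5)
Mirroring the five-cover analysis, we first build the length-five chain that pins down $U\setminus V$. Choose private critical coordinates $x_i\in B_i\setminus\bigcup_{j\ne i}B_j$ for $i\in[4]$, which exist by minimality of the cover (\cref{prop:rho-bounds}), and set $S_I=\bigcup_{i\in I}B_i$ for nonempty $I\subseteq[4]$; these $15$ members are distinct because $x_i\in S_I$ iff $i\in I$. The chain $B_1\subsetneq B_1\cup B_2\subsetneq B_1\cup B_2\cup B_3\subsetneq V\subsetneq U$ already has five members. Hence no member may be inserted strictly between $V$ and $U$. For any $w\in U\setminus V$, the coatom $C_w$ contains all of $K$ by \cref{lem:Ca-properties}, hence contains every $x_i$. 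If $C_w\not\subseteq V$, then $V\subsetneq V\cup C_w\subsetneq U$ gives a six-chain. So $C_w\subseteq V$, and since $V$ avoids $w$ we get $C_w=V$. By \cref{lem:Ca-properties}(ii) the map $a\mapsto C_a$ is injective, so at most one $w$ has $C_w=V$, which gives $U\setminus V=\set{w}$. The coordinate $w$ is noncritical because $K\subseteq V$.

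Next comes the decomposition. Put $\A=\set{F\in\F:w\notin F}$ and $\mathcal B=\set{F\setminus\set{w}:w\in F\in\F}$; both consist of subsets of $V$ because $U=V\cup\set{w}$. The closure relations \cref{eq:stable-closure} are just union-closure of $\F$ split by whether $w$ is present. For \cref{eq:stable-size}, note $\abs{\mathcal B}=f(w)\le t$, and $w\notin K$ gives $f(w)<t$, so $\abs\A=2t-f(w)>t>\abs{\mathcal B}$. One should also check that $\mathcal B$ is nonempty, since $U\in\F$ gives $V\in\mathcal B$, although this is not needed. Every $S_I$ lies in $V$ and so avoids $w$, which puts all $15$ Boolean tops in $\A$.

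The fiber statement follows the pattern of \cref{lem:boolean-fibers}. For $A\in\A$ with private support $I=I(A)\subseteq[4]$, we first show $A\subseteq S_I$ when $I\ne\varnothing$: otherwise the partial unions up to $S_I$, then $A\cup S_I$, then the remaining $B_j$ one at a time, and finally $U$ give a chain of at least six members. This uses that the top of that chain is $V\ne U$, so adjoining $U$ is strict. Then a chain of $\abs I+1$ members in the $\A$-fiber over $I$, followed by the $4-\abs I$ missing $B_j$'s and then $U$, has six members, so the height is at most $\abs I$. For $I=\varnothing$, the chain $A\subsetneq A\cup B_1\subsetneq\cdots\subsetneq A\cup V\subseteq V\subsetneq U$ has six members as long as $A\cup V$ is $V$, which holds since $A\subseteq V$. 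So every member of $\A$ has nonempty support, and the fiber over $I$ is understood to be the nonempty-support fiber.

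The main obstacle is the very first step, locating $C_w$ inside $V$; after that everything is bookkeeping. One must confirm that $V\cup C_w$ is distinct from both $V$ and $U$, and that it differs from $U$ precisely because it avoids $w$. One must also check that the extended chain really has six distinct members.
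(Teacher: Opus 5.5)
Your proof is correct and follows essentially the paper's argument: the five-member chain $B_1\subsetneq B_1\cup B_2\subsetneq B_1\cup B_2\cup B_3\subsetneq V\subsetneq U$ leaves no room above $V$, and separation then collapses $U\setminus V$ to a single coordinate. After that, the decomposition, the inequality $\abs{\A}>\abs{\mathcal B}$, and the fiber bounds are the same bookkeeping as in \cref{lem:boolean-fibers}, with $U$ as the sixth member. The only cosmetic difference is that you apply the six-chain argument to $C_w$ alone, obtaining $C_w=V$, and then invoke injectivity of $a\mapsto C_a$ from \cref{lem:Ca-properties}; the paper instead applies it to every member not contained in $V$ and observes directly that all coordinates outside $V$ share one occurrence vector. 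Your remark that $K\subseteq C_w$ is never actually used.
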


\begin{proof}
Choose private critical coordinates
\(
x_i\in B_i\setminus\bigcup_{j\ne i}B_j
\).
If \(A\nsubseteq V\) and \(A\cup V\subsetneq U\), then
\[
B_1
\subsetneq B_1\cup B_2
\subsetneq B_1\cup B_2\cup B_3
\subsetneq V
\subsetneq A\cup V
\subsetneq U
\]
is a six-member chain.  Hence every member not contained in \(V\) contains all of \(U\setminus V\).  Members contained in \(V\) omit all of \(U\setminus V\), so the coordinates outside \(V\) have identical occurrence vectors.  Separation gives \cref{eq:outside-one}.

Since the four basis members cover every critical coordinate, \(w\) is noncritical.  Splitting the family according to the presence of \(w\) gives \cref{eq:stable-decomp}; union-closure gives \cref{eq:stable-closure}.  Since
\[
f(w)=\abs{\mathcal B}<t
=\frac{\abs\A+\abs{\mathcal B}}2,
\]
we obtain \cref{eq:stable-size}.  The Boolean-top and fiber-height assertions follow from the same six-chain argument as in \cref{lem:boolean-fibers}, now using \(U\) as the final sixth member.
\end{proof}

\section{The remaining height-five branches}\label{sec:remaining}

Combining the preceding results, every \(5\)-minimal counterexample must lie in one of the following branches:
\begin{enumerate}[label=\textup{(\Alph*)},leftmargin=2.6em]
\item \(\rho_c(\F)=3\);
\item \(\rho_c(\F)=4\) and a minimal four-cover has union \(U\);
\item \(\rho_c(\F)=4\) and \(U=V\mathbin{\dot\cup}\set{w}\), with the stable two-fiber normal form of \cref{thm:four-cover-normal}.
\end{enumerate}
In every branch, the critical coatoms, the critical-pair dichotomy, and the projection-witness inequalities remain in force.

In branch \textup{(C)}, write
\(
a=\abs\A>b=\abs{\mathcal B}
\)
and define
\[
d_{\A}(y):=\abs{\set{A\in\A:y\notin A}}.
\]
A sufficient statement is the stable-pair defect inequality
\begin{equation}\label{eq:stable-defect-target}
d_{\A}(y)-f_{\mathcal B}(y)
<\frac{a-b}{2}
\end{equation}
for some \(y\in V\).  Such a coordinate would satisfy
\[
f_{\A}(y)+f_{\mathcal B}(y)
>\frac{a+b}{2},
\]
contradicting the counterexample assumption.

The projection formulation gives a second coordinatewise target: find a critical \(p\) satisfying \cref{eq:transfer}.  The main unresolved issue is the simultaneous compatibility of the stable trace geometry, the critical join-cover constraints, and the matching-defect witnesses.

\begin{problem}[Stable-pair defect transfer]\label{prob:stable-defect}
Prove \cref{eq:stable-defect-target} for every stable pair \((\A,\mathcal B)\) arising from branch \textup{(C)}.
\end{problem}

\section{Concluding remarks}

The height-four proof closes through a rigid equality case.  Two critical coordinates force a double-avoidance fiber with at least \(n-1\) members; height four restricts that fiber to height two, where the extremal family is unique.  A third critical coordinate then determines the eight incidence cells and gives \(t=2n-2\), contradicting the linear lower bound \(t\ge2n-1\).

At height five, the additional level has been reduced substantially but not eliminated.  Critical coordinates are genuine coatoms; nonfull critical pairs have a three-trace geometry; critical-coordinate projections impose exact matching-defect witnesses; and the critical join-cover number is only three or four.  The five-cover exclusion is global: a hypothetical five-cover generates a saturated copy of the nonempty Boolean lattice on five private critical coordinates, while the residual support multihypergraph is incompatible with the frequency capacity of the nonprivate coordinates.

Thus the height-five problem is reduced to explicit three- and four-cover configurations rather than an undifferentiated general case.  Completing the argument requires either excluding those remaining configurations or proving a projection-transfer inequality valid across them.  No unrestricted empty-set-free height-five theorem is claimed here.

\end{document}